\def\BibTeX{{\rm B\kern-.05em{\sc i\kern-.025em b}\kern-.08em
    T\kern-.1667em\lower.7ex\hbox{E}\kern-.125emX}}
\newcommand{\bb}[1]{\mathbb{ #1 }}
\newcommand{\N}{\bb{N}}
\newcommand{\R}{\bb{R}}
\newcommand{\Z}{\bb{Z}}
\DeclareMathOperator*{\argmax}{arg\,max}
\newtheorem{theorem}{Theorem}
\newtheorem{lemma}{Lemma}
\theoremstyle{definition}
\newtheorem{assumption}{Assumption}
\newtheorem{definition}{Definition}
\newtheorem{example}{Example}
\newtheorem{remark}{Remark}
\begin{document}
\title{Compactly Restrictable Metric Policy Optimization Problems}
\author{Victor D. Dorobantu, Kamyar Azizzadenesheli, and Yisong Yue
\thanks{Submitted May 15th, 2021. Resubmitted July 6th, 2022.
This work was supported in part by DARPA and Beyond Limits. Victor D. Dorobantu was also supported in part by a Kortschak Fellowship.}
\thanks{Victor D. Dorobantu and Yisong Yue are with the Department of Computing and Mathematical Sciences, California Institute of Technology, Pasadena, CA 91125 USA. (e-mails: \{vdoroban, yyue\}@caltech.edu). Yisong Yue is affiliated with Argo AI, Pittsburgh, PA. Kamyar Azizzadenesheli is with 
the Department of Computer Science, Purdue University, West Lafayette, IN 47907 USA (e-mail: kamyar@purdue.edu).}
}

\maketitle

\begin{abstract}
We study policy optimization problems for deterministic Markov decision processes (MDPs) with metric state and action spaces, which we refer to as Metric Policy Optimization Problems (MPOPs). Our goal is to establish theoretical results on the well-posedness of MPOPs that can characterize practically relevant continuous control systems. To do so, we define a special class of MPOPs called Compactly Restrictable MPOPs (CR-MPOPs), which are flexible enough to capture the complex behavior of robotic systems but specific enough to admit solutions using dynamic programming methods such as value iteration. We show how to arrive at CR-MPOPs using forward-invariance. We further show that our theoretical results on CR-MPOPs can be used to characterize feedback linearizable control affine systems.
\end{abstract}

\begin{IEEEkeywords}
Continuous Markov Decision Processes, Reinforcement Learning, Optimal Control, Value Iteration, Selection Theorems, Sampled-Data, Physical Systems
\end{IEEEkeywords}

\section{Introduction}
\label{sec:introduction}

\IEEEPARstart{P}{olicy} optimization is a cornerstone in planning, control and reinforcement learning. Classic approaches include value iteration and policy iteration \cite{dynkin1979controlled, puterman2014markov, bertsekas2011dynamic, bertsekas2019reinforcement, sutton2018reinforcement}. From a theoretical standpoint, a key step is establishing when policy optimization is well-posed, i.e., when optimal policies exist, and when they can be found algorithmically. While such results for value and policy iteration are well established for discrete systems with finitely many states and actions (also known as the tabular setting), relatively few foundational results have been established for continuous control.

In this paper, we study Metric Policy Optimization Problems (MPOPs), which come endowed with metric state and action spaces. Compared to tabular MDPs, several new challenges arise in the continuous setting. Even for deterministic problems, rewards may be unbounded, maxima of functions (over actions) may not exist, and the domains of value functions may be different for different policies. Without addressing these challenges, optimal policies need not exist and value iteration or policy iteration may be impossible.

In order to establish well-posedness of dynamic programming approaches, we define the class of Compactly Restrictable MPOPs (CR-MPOPs). We show that CR-MPOPs arise naturally when imposing forward-invariance constraints on the policy class one optimizes over. As such, CR-MPOPs are well suited for characterizing many systems which rely on forward-invariance for controller design.

Sampled-data design \cite{nevsic1999sufficient} allows us to synthesize policies for continuous-time systems when inputs are passed through a zero-order hold (held constant over fixed frequency time intervals), as is realistic for many physical systems. We leverage recent results \cite{taylor2021sampled, taylor2022safety} to certify the existence of a compact subset of the state space that can be rendered forward-invariant through control. These results are readily applicable to feedback linearizable control affine systems, allowing us to design CR-MPOPs for a wide array of complex systems.

Many frameworks extend the theory of discrete MDPs to general state and action spaces. Most relevant for our work, \cite{dynkin1979controlled} develops semicontinuous MDPs with Borel measurable policy classes, \cite{blackwell1974optimal} and \cite{bertsekas1996stochastic} develop results for analyticially and universally measurable policy classes, respectively, and \cite{feinberg2012average} refines conditions for the well-posedness of value iteration. For a more complete summary, see \cite{yu2015mixed}. Our work focuses on developing classes of policy optimization problems that can naturally connect theoretical results from reinforcement learning with nonlinear continuous-time control, and can be directly translated to checking certain properties in control systems.

Methods that maintain the forward-invariance of subsets of the state space of interest have been well-studied \cite{blanchini1999set, prajna2005optimization, ames2016control}. These and related methods have recently found applications in safe reinforcement learning \cite{Berkenkamp2017SafeRL, rosolia2017learning, cheng2019end, choi2020reinforcement}. Physical systems and robotic platforms have been popular applications for reinforcement learning methods recently, with the majority of methods employing function approximation, discretizations (such as fixed or adaptive gridding or state aggregation), or direct policy search \cite{kober2013reinforcement,levine2016end, lillicrap2015continuous}. 
While policy search methods generally have no guarantees of global optimality, they have received much attention due to the scalability problems of dynamic programming methods (especially after discretization) and convergence problems of approximate dynamic programming (for a more complete discussion of the relative merits of these methods, see \cite[\S 2.3]{kober2013reinforcement}). In contrast, our goal is to identify settings that are compatible with value iteration and to use control theoretic tools to guide solution methods so we can expect good performance.

\subsection{Contributions}
We develop our results in the following phases:
\begin{inparaenum}[1)]
    \item We describe a generic property (Assumption \ref{as:well-posed}) that admits the well-posedness of value iteration for MPOPs (Theorem \ref{thm:well-posed}).
    \item We leverage the forward-invariance of compact sets to design MPOPs and policy classes that satisfy Assumption \ref{as:well-posed}, allowing us to prove well-posedness of a large class of policy optimization problems (Theorem \ref{thm:invariance}). Such problem settings are called CR-MPOPs.
    \item We apply our results on CR-MPOPs to analyze control affine systems with time-sampled control inputs.
    \item We further apply our results to analyze robotic systems with time-sampled control inputs, which comprise a large class of control affine systems. These results translate the generic requirements for well-posedness to concrete requirements on the robotic system.
    \item We finally show that full-state feedback linearizable control affine systems can render a compact subset of the state space forward invariant with continuous controllers (thus satisfying conditions of Theorem \ref{thm:invariance}), demonstrating practically relevant instances of CR-MPOPs.
\end{inparaenum}

\subsection{Notation, Conventions, and Definitions}
We consider nonempty metric spaces throughout this paper and endow each such space with the $\sigma$-algebra generated by its topology, called the Borel $\sigma$-algebra. In this case, open sets and closed sets are measurable sets and continuous functions are measurable functions. A metric space is separable if it contains a countable dense subset. Finite or countable sets can be regarded as separable metric spaces when equipped with the discrete metric \cite[\S 3.A]{kechris2012classical}. For nonempty metric spaces $X$ and $Y$, the set of measurable functions from $X$ to $Y$ is denoted $\mathcal{L}^0(X; Y)$. A bounded measurable function $f: X \to \R$ is upper semicontinuous if for any $c \in \R$, the preimage $f^{-1}([c, \infty)) = \{ x \in X: f(x) \geq c \}$ is closed.

The set of all subsets of $Y$ is the powerset of $Y$, denoted $\mathcal{P}(Y)$. A set-valued function $C: X \to \mathcal{P}(Y)$ is called a correspondence.
The graph of $C$ is defined as:
\begin{equation}
    \mathrm{Graph}(C) = \{ (x, y) \in X \times Y: y \in C(x) \}.
\end{equation}
If $C(x) \neq \emptyset$ for all $x \in X$, then a selector of $C$ is a function $f: X \to Y$ satisfying $f(x) \in C(x)$ for all $x \in X$. For any set $B \subseteq Y$, the lower preimage of $B$ under $C$ is defined as:
\begin{equation}
    C^{\ell}(B) = \{ x \in X: C(x) \cap B \neq \emptyset \}.
\end{equation}
The correspondence $C$ is upper hemicontinuous\footnote{Some authors call such correspondences upper semicontinuous; we use hemicontinuous to distinguish correspondences from real-valued functions.} if the lower preimage of each closed set is a closed set \cite[Lemma 17.4]{aliprantis2006infinite}.

A nonempty subset $X_0 \subseteq X$ is a metric space when equipped with the restriction of the metric on $X$ to $X_0 \times X_0$. For any open set $U \subseteq X_0$, there is an open set $V \subseteq X$ such that $U = X_0 \cap V$. For any continuous function $f: X \to Y$, the restriction $\left.f\right|_{X_0}: X_0 \to Y$ is continuous. Similarly, for any measurable set $A \subseteq X_0$, there is a measurable set $B \subseteq X$ such that $A = X_0 \cap B$, and for any measurable function $f: X \to Y$, the restriction $\left.f\right|_{X_0}$ is measurable. 

Consider a nonempty metric space $Z$ and a function $f: X \times Y \to Z$. For $x \in X$ and $y \in Y$, the $x$-section $f_x: Y \to Z$ and the $y$-section $f^y: X \to Z$ are defined as:
\begin{align}
    f_x(y') &= f(x, y'), & f^y(x') = f(x', y),
\end{align}
for all $x' \in X$ and $y' \in Y$. If $f$ is continuous, then $f$ has continuous sections, and if $f$ is measurable, then $f$ has measurable sections. We also define sections for functions on graphs. Consider a correspondence $C: X \to \mathcal{P}(Y)$ with $C(x) \neq \emptyset$ for all $x \in X$, and a function $f: \mathrm{Graph}(C) \to Z$. For $x \in X$, we define the $x$-section $f_x: C(x) \to Z$ as above.

$\N$ denotes the natural numbers, $\Z_+$ denotes the nonnegative integers, $\R_+$ and $\R_{++}$ denote nonnegative and positive reals, and $\bb{S}^d_+$ and $\bb{S}^d_{++}$ denote $d \times d$ positive semidefinite and definite matrices.

\section{Metric Policy Optimization Problems}
\label{sec:mdp}
In this paper, a deterministic Markov decision process (MDP) will be characterized by a tuple $(\mathcal{S}, \mathcal{A}, C, f, r, \gamma)$, with state space $\mathcal{S}$, action space $\mathcal{A}$, action-admissibility correspondence $C: \mathcal{S} \to \mathcal{P}(\mathcal{A})$ satisfying $C(s) \neq \emptyset$ for each state $s \in \mathcal{S}$, transition map $f: \mathrm{Graph}(C) \to \mathcal{S}$, reward function $r: \mathrm{Graph}(C) \to \R$, and discount factor $\gamma \in [0, 1)$. Note while $r$ may be unbounded, $r$ cannot assume the values $\pm \infty$. We refer to an MDP as a metric MDP if $\mathcal{S}$ and $\mathcal{A}$ are nonempty separable metric spaces and $f$ and $r$ are measurable functions. In this paper, we focus solely on metric MDPs, and often refer to them simply as MDPs for brevity.

We will limit our consideration to deterministic, Markovian, and stationary (time-invariant) policies. In this case, a policy $\pi: \mathcal{S} \to \mathcal{A}$ is a selector of $C$; that is, $\pi(s) \in C(s)$ for all states $s \in \mathcal{S}$. The corresponding closed-loop transition map $f_\pi: \mathcal{S} \to \mathcal{S}$ and single-step reward function $r_\pi: \mathcal{S} \to \R$ are defined as:
\begin{align}
    f_\pi(s) &= f(s, \pi(s)), & r_\pi(s) = r(s, \pi(s)).
\end{align}
for all states $s \in \mathcal{S}$. For any $t \in \Z_+$, let $f_\pi^t: \mathcal{S} \to \mathcal{S}$ denote the $t$-iterated composition of $f_\pi$. Define the subset $\mathcal{S}_\pi \subseteq \mathcal{S}$ as:
\begin{equation}
    \mathcal{S}_\pi = \left\{ s \in \mathcal{S}: \sum_{t = 0}^\infty \gamma^t r_\pi( f_\pi^t(s) ) ~\mathrm{converges~absolutely} \right\}.
\end{equation}
If $r_\pi$ is bounded, then $\mathcal{S}_\pi = \mathcal{S}$. If $\gamma > 0$, then $f_\pi(\mathcal{S}_\pi) \subseteq \mathcal{S}_\pi$, as a convergent series still converges after the first term is removed. If $\mathcal{S}_\pi \neq \emptyset$, then the corresponding value function $V_\pi: \mathcal{S}_\pi \to \R$ is defined explicitly and implicitly, for all states $s \in \mathcal{S}_\pi$, as:
\begin{equation}
    V_\pi(s) = \sum_{t = 0}^\infty \gamma^t r_\pi(f_\pi^t(s)) = r_\pi(s) + \gamma V_\pi(f_\pi(s)).
\end{equation}

A set of policies $\Pi$ admits a partial order if $\mathcal{S}_\pi = \mathcal{S}$ for all policies $\pi \in \Pi$. In this case, for policies $\pi, \pi' \in \Pi$, if $V_{\pi}(s) \geq V_{\pi'}(s)$ for all states $s \in \mathcal{S}$, then $\pi \succeq \pi'$.

\begin{definition}[Metric Policy Optimization Problem]
We refer to the pair of an MDP $(\mathcal{S}, \mathcal{A}, C, f, r, \gamma)$ and a policy class $\Pi$ admitting a partial order as a \textit{metric policy optimization problem} (MPOP). We call an MPOP well-posed if there is an optimal policy $\pi^* \in \Pi$ with $\pi^* \succeq \pi$ for all policies $\pi \in \Pi$.
\end{definition}

\textbf{Goals of this section.} Our goal is to analyze well-posedness of policy optimization for MDPs. We first describe sufficient conditions for well-posedness of value iteration (Theorem \ref{thm:well-posed}). We then establish conditions under which ill-posed MPOPs can be restricted to well-posed problems by only considering policies that render the same subset of the state space forward-invariant (Theorem \ref{thm:invariance}), resulting in CR-MPOPs. We will show how to apply our results on value iteration to control affine systems in Section \ref{sec:control}.

Throughout this section, we refer to and modify the following example to ground our presentation:
\begin{example}\label{ex:running-example}
Consider the state space $\mathcal{S} = \R$, action space $\mathcal{A} = \R$, and the constant action-admissibility correspondence $C = \mathcal{A}$. Additionally, consider the transition function $f: \mathrm{Graph}(C) \to \mathcal{S}$ and reward function $r: \mathrm{Graph}(C) \to \R$ defined as:
\begin{align}
    f(s, a) &= s + \tanh{(a)}, & r(s, a) = -s^2 - (\tanh{(a)})^2,
\end{align}
for all state-action pairs $(s, a) \in \mathrm{Graph}(C)$, and a discount factor $\gamma \in [0, 1)$. Note that $| f(s, a) | \leq | s | + | \tanh{(a)} | \leq | s | + 1$ for all state-action pairs $(s, a) \in \mathrm{Graph}(C)$. Consider any policy $\pi: \mathcal{S} \to \mathcal{A}$. By induction, we have $| f_\pi^t(s) | \leq | s | + t$ for all states $s \in \mathcal{S}$ and $t \in \Z_+$. For any state $s \in \mathcal{S}$ and $T \in \Z_+$, we have: 
\begin{align}
    \sum_{t = 0}^T \gamma^t r_\pi(f_\pi^t(s)) &\geq -\sum_{t = 0}^\infty \gamma^t ( (f_\pi^t(s))^2 + (\tanh{(\pi(f_\pi^t(s)))})^2 )\nonumber\\
    &\geq -\sum_{t = 0}^\infty \gamma^t ( s^2 + 2|s|t + t^2 + 1 )\nonumber\\
    &= -\frac{s^2 + 1}{1 - \gamma} - \frac{2\gamma |s|}{(1 - \gamma)^2} - \frac{\gamma(\gamma + 1)}{(1 - \gamma)^3}.
\end{align}
The sequence of partial sums as $T \to \infty$ is monotone and bounded below, so $V_\pi(s)$ is well-defined. As $s$ was arbitrary, we have $\mathcal{S}_\pi = \mathcal{S}$.
\end{example}

\subsection{Value Iteration}
Consider a policy class $\Pi$ with $\mathcal{S}_\pi = \mathcal{S}$ for each policy $\pi \in \Pi$ and $\sup_{\pi \in \Pi} V_\pi(s) < \infty$ for each state $s \in \mathcal{S}$. Accordingly, we define the optimal (with respect to $\Pi$) value function $V^*: \mathcal{S} \to \R$ as:
\begin{equation}
    V^*(s) = \sup_{\pi \in \Pi} V_\pi(s),
\end{equation}
for all states $s \in \mathcal{S}$. If a policy $\pi^* \in \Pi$ satisfies $V_{\pi^*} = V^*$, then $\pi^*$ is optimal and the policy optimization problem is well-posed.

Value iteration generates a sequence of approximations of $V^*$. Given an initial guess $V_0: \mathcal{S} \to \R$, we seek a sequence of real-valued functions $\{ V_n : n \in \N \}$ satisfying:
\begin{equation}\label{eq:value-iteration-step}
    V_{n + 1}(s) = \sup_{a \in C(s)}{\{ r(s, a) + \gamma V_n(f(s, a)) \}},
\end{equation}
for all states $s \in \mathcal{S}$ and $n \in \Z_+$.

We now describe conditions under which MPOPs are well-posed and can be solved with value iteration (Theorem \ref{thm:well-posed}). When assumptions are strengthened by requiring $\mathcal{S}$ and $\mathcal{A}$ to be Polish spaces, we will make use of \cite[Theorem 4.1]{feinberg2012average} (the setting of this theorem is called a semicontinuous model in \cite[\S 6.7]{dynkin1979controlled}). We provide a more direct proof of well-posedness without these additional assumptions in the appendix.

\begin{assumption}\label{as:well-posed}
The action admissibility correspondence $C$ has compact values and is upper hemicontinuous, the transition function $f$ is continuous, the reward function $r$ is upper semicontinuous and bounded, and the policy class $\Pi$ is the set of all measurable selectors of $C$. Moreover, for each state-action pair $(s, a) \in \mathrm{Graph}(C)$, there is a corresponding policy $\pi_{s, a} \in \Pi$ satisfying $\pi_{s, a}(s) = a$.
\end{assumption}

\begin{remark}\label{rem:relax-assumption}
If the projection of $\mathrm{Graph}(C)$ onto the action space is compact, then the assumption that for any state-action pair $(s, a) \in \mathrm{Graph}(C)$, there exists a policy $\pi_{s, a} \in \Pi$ with $\pi_{s, a}(s) = a$ can be removed. See Lemma \ref{lem:relax-assumption}.
\end{remark}

\begin{remark}[Tabular MDP]
If $\mathcal{S}$ and $\mathcal{A}$ are finite sets equipped with discrete metrics and $\Pi$ is the set of all selectors of $C$, then these assumptions are immediately met. This follows since the discrete metric renders all sets are open, closed, and compact, and all functions defined on such sets are continuous (and bounded if real-valued).
\end{remark}

\begin{example}\label{ex:running-example-well-posed}
Consider the MDP $(\mathcal{S}, \mathcal{A}, C, f, r, \gamma)$ from Example \ref{ex:running-example}. While $C$ is upper hemicontinuous, it is not compact-valued. While $f$ and $r$ are continuous, $r$ is unbounded. Consider the constant correspondence $C' = [-1, 1]$, and the reward $r': \mathrm{Graph}(C') \to \R$:
\begin{equation}
    r'(s) = e^{-s^2} - (\tanh{(a)})^2,
\end{equation}
for all state-action pairs $(s, a) \in \mathrm{Graph}(C')$. Note that $\mathrm{Graph}(C') = \mathcal{S} \times [-1, 1]$, so $r'$ is bounded. Since $f$ is continuous, its restriction to $\mathrm{Graph}(C')$ is as well. With $\Pi$ the set of all measurable selectors of $C'$, note that for every pair $(s, a) \in \mathrm{Graph}(C')$, the constant policy $\pi_{s, a} = a$ is a measurable selector of $C'$, so $\pi_{s, a} \in \Pi$. Thus, the MPOP defined by the MDP $(\mathcal{S}, \mathcal{A}, C', \left.f\right|_{\mathrm{Graph}(C')}, r', \gamma)$ and policy class $\Pi$ satisfies Assumption \ref{as:well-posed}.
\end{example}

We now demonstrate well-posedness under Assumption \ref{as:well-posed}.
\begin{theorem}\label{thm:well-posed}
Consider an MPOP characterized by an MDP $(\mathcal{S}, \mathcal{A}, C, f, r, \gamma)$ and a policy class $\Pi$ that satisfies Assumption \ref{as:well-posed}. There is an optimal policy $\pi^* \in \Pi$ satisfying $V_{\pi^*} = V^*$, and $V^*$ is the limit of value iteration when the initial guess is bounded and upper semicontinuous.
\end{theorem}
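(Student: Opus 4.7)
My approach is the standard Banach--Bellman contraction argument, adapted to the space of bounded upper semicontinuous (usc) functions, followed by a measurable selection step to extract an optimal policy from the argmax correspondence.

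\textbf{Step 1 (Bellman operator preserves bounded upper semicontinuity).} I would first introduce the Bellman operator
\[
(TV)(s) = \sup_{a \in C(s)} \{ r(s, a) + \gamma V(f(s, a)) \}
\]
on the space $B_{\mathrm{usc}}(\mathcal{S})$ of bounded usc functions, equipped with the sup norm. For such $V$, the map $(s, a) \mapsto r(s, a) + \gamma V(f(s, a))$ is usc on $\mathrm{Graph}(C)$: $r$ is usc, and $V \circ f$ is usc since $V$ is usc and $f$ is continuous. Berge's maximum theorem, applied with $C$ compact-valued and upper hemicontinuous, then gives that $TV$ is usc, and boundedness of $r$ and $V$ gives boundedness of $TV$.

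\textbf{Step 2 (Contraction and value iteration convergence).} A standard estimate using $|\sup g_1 - \sup g_2| \leq \sup |g_1 - g_2|$ yields $\|TV - TW\|_\infty \leq \gamma \|V - W\|_\infty$, so $T$ is a $\gamma$-contraction. Since uniform limits of bounded usc functions are bounded and usc, $B_{\mathrm{usc}}(\mathcal{S})$ is closed in the Banach space of bounded functions, hence complete. Banach's fixed point theorem yields a unique $\tilde V^* \in B_{\mathrm{usc}}(\mathcal{S})$ with $T \tilde V^* = \tilde V^*$, and $T^n V_0 \to \tilde V^*$ uniformly for every bounded usc starting guess $V_0$; this is exactly value iteration converging.

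\textbf{Step 3 (Extract an optimal policy by measurable selection, and identify $\tilde V^* = V^*$).} By Berge again, the argmax correspondence
\[
M(s) = \argmax_{a \in C(s)} \{ r(s, a) + \gamma \tilde V^*(f(s, a)) \}
\]
is nonempty, compact-valued, and upper hemicontinuous. Using a measurable selection theorem suited to the separable metric setting (Kuratowski--Ryll-Nardzewski, or an Effros/Himmelberg-style result for correspondences with closed graph and compact values), together with the richness clause of Assumption \ref{as:well-posed}, I would produce a measurable selector $\pi^* \in \Pi$ of $M$. By construction $r_{\pi^*}(s) + \gamma \tilde V^*(f_{\pi^*}(s)) = \tilde V^*(s)$ for all $s \in \mathcal{S}$; unrolling this identity and invoking boundedness of $\tilde V^*$ together with $\gamma < 1$ yields $V_{\pi^*} = \tilde V^*$. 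For any $\pi \in \Pi$, a one-step expansion gives $V_\pi \leq T V_\pi$, so by monotonicity $V_\pi \leq T^n V_\pi \to \tilde V^*$, hence $V^* \leq \tilde V^*$; the reverse inequality $V^* \geq V_{\pi^*} = \tilde V^*$ is immediate, so $V^* = \tilde V^*$ and $\pi^*$ is optimal.

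\textbf{Main obstacle.} The delicate step is the measurable selection in Step 3: the authors specifically want to avoid the Polish-space hypothesis behind \cite{feinberg2012average}, yet must still extract a Borel measurable maximizer of an usc objective over a compact upper hemicontinuous correspondence whose range is only separable metric. I expect the appendix proof to combine the closed-graph and compact-values structure of $M$ with the richness condition in Assumption \ref{as:well-posed} (which is there precisely to guarantee a sufficient stock of measurable selectors in $\Pi$) to produce $\pi^*$ directly, bypassing any need for $\mathcal{A}$ to be Polish; Remark \ref{rem:relax-assumption} already suggests that compactness of the projection of $\mathrm{Graph}(C)$ onto $\mathcal{A}$ provides an alternate route when richness is unavailable.
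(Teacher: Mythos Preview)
Your approach is essentially the paper's appendix strategy (Bellman contraction on bounded usc functions, then measurable selection), but with two differences worth noting.

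First, your Step 2 is cleaner than the paper's. The appendix explicitly declines to apply Banach's fixed point theorem, asserting that $\mathcal{C}_b^u(\mathcal{S})$ is not closed in $\mathcal{L}_b^0(\mathcal{S})$, and instead runs value iteration from the constant $M/(1-\gamma)$, obtains a monotone nonincreasing sequence, and uses that pointwise limits of decreasing sequences of usc functions are usc. Your claim that uniform limits of bounded usc functions are usc is in fact correct, so your direct Banach argument on $\mathcal{C}_b^u(\mathcal{S})$ is valid and shorter. Similarly, your identification $V^*=\tilde V^*$ (extract $\pi^*$ first, get $V_{\pi^*}=\tilde V^*$, then $V_\pi\le \tilde V^*$ by monotonicity) is more economical than the paper's $\varepsilon$-optimal policy argument.

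Second, your Step 3 invokes ``Berge again'' to conclude the argmax correspondence $M$ is upper hemicontinuous; that is not what Berge gives when the objective is merely usc and $C$ merely upper hemicontinuous (full Berge needs continuity of both). This is harmless because you do not actually use uhc of $M$ downstream---what you need is a measurable selector, and you correctly flag this as the crux. The paper supplies exactly this as Lemma~\ref{lem:usc-measurable-selection}: it shows directly that the argmax correspondence has nonempty compact values and satisfies a measurability criterion from \cite{dynkin1979controlled}, by approximating the usc objective from above by continuous functions and exploiting weak measurability of $C$ via its distance function. No Polish hypothesis is needed. The richness clause of Assumption~\ref{as:well-posed} is used in the paper only to equate $\sup_{\pi\in\Pi}[\mathcal{T}_\pi g](s)$ with $\sup_{a\in C(s)}\{r(s,a)+\gamma g(f(s,a))\}$; your argument, which works directly with the action-wise supremum and extracts $\pi^*\in\Pi$ as a measurable selector of $M\subseteq C$, does not actually need it.
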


\begin{proof}
If $\mathcal{S}$ and $\mathcal{A}$ are Polish spaces, we can show that the MDP in question satisfies the three conditions of Assumptions (W*) of \cite{feinberg2012average}, in which case, there is an optimal policy and $V^*$ is the limit of value iteration when the initial condition is the zero function (a conclusion of \cite[Theorem 4.1]{feinberg2012average}). The first condition follows since $r$ is upper semicontinuous and bounded above (as it is bounded both above and below), which corresponds to a lower semicontinuous cost function that is bounded below. The second condition is satisfied by the upper hemicontinuity of $C$ (using the sequential definition in \cite[Theorem 17.20]{aliprantis2006infinite}) since $r$ is also bounded below. The third condition is satisfied since the MDP is deterministic (for which transition probabilities are represented by Dirac measures). In this case, we require:
\begin{equation}
    \lim_{n \to \infty}{ \varphi(f(s_n, a_n)) } = \varphi(f(s, a)),
\end{equation}
for any bounded continuous function $\varphi: \mathcal{S} \to \R$ and any convergent sequence $\{ (s_n, a_n) \in \mathrm{Graph}(C): n \in \N \}$ converging to $(s, a) \in \mathrm{Graph}(C)$; equality follows since $\varphi \circ f$ is continuous (as the composition of continuous functions).

If $\mathcal{S}$ and $\mathcal{A}$ are not both Polish spaces, we prove well-posedness in the appendix, in which case, $V^*$ is the limit of value iteration when the initial guess is any bounded and upper semicontinuous function.
\end{proof}

We conclude with a lemma mentioned in Remark \ref{rem:relax-assumption}, allowing Assumption \ref{as:well-posed} to be relaxed slightly.

\begin{lemma}\label{lem:relax-assumption}
Suppose an MDP $(\mathcal{S}, \mathcal{A}, C, f, r, \gamma)$ and policy class $\Pi$ satisfy Assumption \ref{as:well-posed}, suppose the projection of $\mathrm{Graph}(C)$ onto $\mathcal{A}$ is compact, and let $\rho_{\mathcal{A}}: \mathcal{A} \times \mathcal{A} \to \R_+$ denote the metric on $\mathcal{A}$. For any state-action pair $(s, a) \in \mathrm{Graph}(C)$, the function $\varphi_{s, a}: \mathrm{Graph}(C) \to \R$ defined as:
\begin{equation}
    \varphi_{s, a}(s', a') = -\rho_{\mathcal{A}}(a', a),
\end{equation}
for all state-action pairs $(s', a') \in \mathrm{Graph}(C)$ admits a maximizing policy $\pi_{s, a} \in \Pi$ for which $\pi_{s, a}(s) = a$.
\end{lemma}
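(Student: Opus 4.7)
The plan is to construct $\pi_{s,a}$ as a measurable selector of the pointwise maximizer correspondence of $\varphi_{s,a}$, and then to read off $\pi_{s,a}(s) = a$ from the observation that $a$ is the unique maximizer of the $s$-section of $\varphi_{s,a}$.

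Concretely, I define $M: \mathcal{S} \to \mathcal{P}(\mathcal{A})$ by $M(s') = \argmax_{a' \in C(s')} \varphi_{s,a}(s', a')$. For each $s'$ the function $a' \mapsto -\rho_{\mathcal{A}}(a', a)$ is continuous on the compact set $C(s')$, so $M(s')$ is a nonempty compact subset of $C(s')$. At $s' = s$, the action $a \in C(s)$ attains the value $0$ (the supremum of $-\rho_{\mathcal{A}}(\cdot, a)$), and $\rho_{\mathcal{A}}(a', a) = 0$ forces $a' = a$; hence $M(s) = \{a\}$, and every selector of $M$ is pinned to the value $a$ at $s$.

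Next, I would produce a Borel measurable selector of $M$ by invoking the measurable maximum theorem (e.g., \cite[Thm.~18.19]{aliprantis2006infinite}). Let $\mathcal{A}_0$ denote the projection of $\mathrm{Graph}(C)$ onto $\mathcal{A}$, which is compact by hypothesis and therefore a Polish metric space under the restriction of $\rho_{\mathcal{A}}$. Viewed as a correspondence into $\mathcal{A}_0$, the correspondence $C$ has nonempty compact values; I would show it is weakly measurable, i.e., $C^{\ell}(U)$ is Borel for each open $U \subseteq \mathcal{A}_0$. In a metric space every open set is $F_\sigma$, say $U = \bigcup_n F_n$ with each $F_n$ closed, so $C^{\ell}(U) = \bigcup_n C^{\ell}(F_n)$, and each $C^{\ell}(F_n)$ is closed by the paper's definition of upper hemicontinuity; thus $C^{\ell}(U)$ is Borel. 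The extension $(s', a') \mapsto -\rho_{\mathcal{A}}(a', a)$ of $\varphi_{s,a}$ to $\mathcal{S} \times \mathcal{A}_0$ is continuous, hence a Caratheodory function. The measurable maximum theorem then supplies a Borel measurable selector $\pi_{s,a}: \mathcal{S} \to \mathcal{A}_0 \subseteq \mathcal{A}$ of $M$; because $M(s') \subseteq C(s')$ this selector is also a measurable selector of $C$ and therefore lies in $\Pi$ by Assumption \ref{as:well-posed}.

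The main obstacle I foresee is the bridging step: translating the paper's one-sided definition of upper hemicontinuity (closed lower preimages of closed sets) into the bilateral notion of weak measurability needed by the measurable maximum theorem. That step relies on the countable-union representation of open sets in a metric space. Once it is in hand, the remainder of the proof is a direct application of a standard selection result combined with the trivial identification $M(s) = \{a\}$.
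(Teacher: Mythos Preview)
Your argument is correct. The approach differs from the paper's in its choice of selection machinery. The paper (in both the main text and appendix) observes that $\varphi_{s,a}$ is continuous and bounded and then feeds it into a selection result for \emph{upper semicontinuous} integrands: in the Polish case, \cite[Lemma~3.4]{feinberg2012average} via the Arsenin--Kunugui theorem; in the general separable case, the paper's own Lemma~\ref{lem:usc-measurable-selection}, which is developed anyway for the proof of Theorem~\ref{thm:well-posed}. You instead exploit that $\varphi_{s,a}$ is in fact continuous, extend it trivially to a Carath\'eodory function on $\mathcal{S}\times\mathcal{A}_0$, and invoke the measurable maximum theorem \cite[Thm.~18.19]{aliprantis2006infinite} directly. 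Your bridging step (upper hemicontinuity $\Rightarrow$ weak measurability via the $F_\sigma$ decomposition of open sets) is exactly the content of the chain ``upper hemicontinuous $\Rightarrow$ measurable $\Rightarrow$ weakly measurable'' that the paper records in the appendix using \cite[Lemma~18.2]{aliprantis2006infinite}, so this is not an obstacle. Your route is more elementary and self-contained for this particular lemma, since it avoids the heavier Lemma~\ref{lem:usc-measurable-selection}; the paper's route is essentially free in context because that lemma is already needed for Theorem~\ref{thm:well-posed}, and it has the incidental advantage of not requiring the restriction to the compact range $\mathcal{A}_0$ to make the target space Polish.
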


\begin{proof}
Since $\rho_{\mathcal{A}}$ is continuous, so is $\varphi_{s, a}$. Since the projection of $\mathrm{Graph}(C)$ onto $\mathcal{A}$ is compact, $\varphi_{s, a}$ is bounded. If $\mathcal{S}$ and $\mathcal{A}$ are Polish spaces, then the proof of \cite[Lemma 3.4]{feinberg2012average} uses the Arsenin-Kunugui theorem to show the existence of a policy $\pi_{s, a} \in \Pi$ satisfying:
\begin{align}\label{eq:distance-minimizer}
    \varphi_{s, a}(s', \pi_{s, a}(s')) &= \max_{a' \in C(s')} \varphi_{s, a}(s', a')\nonumber\\
    &= -\min_{a' \in C(s')} \rho_{\mathcal{A}}(a', a),
\end{align}
for all states $s' \in \mathcal{S}$. In particular, $\pi_{s, a}(s) = a$ as $\varphi_{s, a}(s, \pi_{s, a}(s)) = -\min_{a' \in C(s)} \rho_{\mathcal{A}}(a', a) = 0$. This requires Assumptions (W*) of \cite{feinberg2012average} to be met, which is verified in the proof of Theorem \ref{thm:well-posed}.

We arrive at the same conclusion when $\mathcal{S}$ and $\mathcal{A}$ are not both Polish spaces in the appendix.
\end{proof}

\subsection{Compactly Restrictable MPOPs}
\label{sec:cr-mmdps}
While Assumption \ref{as:well-posed} allows us to determine when MPOPs are well-posed and can be solved using value iteration, it is not met for many systems of interest. We now outline an alternative assumption that not only mitigates theoretical shortcomings, but also better fits the problem settings for physical systems. Our main result here is Theorem \ref{thm:invariance}, showing how an MPOP satisfying these new assumptions can generate an MPOP satisfying Assumption \ref{as:well-posed} through a systematic transformation. We call the original MPOP a CR-MPOP.

\subsubsection{Motivation}
Consider an MPOP defined by an MDP $(\mathcal{S}, \mathcal{A}, C, f, r, \gamma)$ and a policy class $\Pi$ that satisfies Assumption \ref{as:well-posed}. The proof of Theorem \ref{thm:well-posed} requires a bounded reward function $r$ and a compact-valued action-admissibility correspondence $C$. For many examples, these requirements are too strict, as shown in Example \ref{ex:running-example-well-posed}. If $C$ can be modified such that its graph is compact and if $r$ is continuous, then $r$ will be bounded over the graph of the modified correspondence. This guides our systematic construction of a well-posed policy optimization problem from an ill-posed one.

If the graph of the modified correspondence is compact, the projection of the graph onto the state space must also be compact. Therefore, if the state space is not already compact, then the modified correspondence must be defined on a compact strict subset $\mathcal{S}_0 \subset \mathcal{S}$; that is, the modified correspondence must have the form $C_0: \mathcal{S}_0 \to \mathcal{P}(\mathcal{A})$, where $C_0(s) \subseteq C(s)$ is nonempty and compact for all states $s \in \mathcal{S}_0$. Moreover, while restricting admissible actions to compact sets is reasonable for physical systems (as forces, voltages, currents, etc. are bounded by physical constraints), these restrictions must be chosen carefully to enforce the nonemptiness condition. For the restriction of $f$ to $\mathrm{Graph}(C_0)$ to be a well-defined transition function, we require $f(s, a) \in \mathcal{S}_0$ for all pairs $(s, a) \in \mathrm{Graph}(C_0)$. Finally, we must ensure that $C_0$ admits measurable selectors.

Restricting our attention to a compact subset $\mathcal{S}_0$ is also reasonable for many physical systems, as dynamics models are often well-understood only in a bounded set around an operating condition.

\subsubsection{Compact Restriction}
For this construction, we use policies that render compact subsets of the state space forward-invariant. A policy $\pi \in \Pi$ renders a subset $\mathcal{S}_0 \subseteq \mathcal{S}$ forward-invariant if $f_\pi(\mathcal{S}_0) \subseteq \mathcal{S}_0$. By induction, we have $f_\pi^t(\mathcal{S}_0) \subseteq \mathcal{S}_0$ for any $t \in \Z_+$.

\begin{definition}[Compactly Restrictable Metric Policy Optimization Problem]
An MPOP characterized by an MDP $(\mathcal{S}, \mathcal{A}, C, f, r, \gamma)$ and a policy class $\Pi$ is a \textit{compactly restrictable metric policy optimization problem} (CR-MPOP) if there is a nonempty compact subset $\mathcal{S}_0 \subseteq \mathcal{S}$ and a continuous policy $\pi_0 \in \Pi$ rendering $\mathcal{S}_0$ forward-invariant.
\end{definition}

\begin{assumption}\label{as:general}
The action-admissibility correspondence $C$ has closed values and $\mathrm{Graph}(C)$ is closed, the transition function $f$ and reward function $r$ are continuous, and the policy class $\Pi$ is the set of all measurable selectors of $C$.
\end{assumption}

To show how an CR-MPOP satisfying Assumption \ref{as:general} can lead to an MPOP satisfying Assumption \ref{as:well-posed}, we need the following lemma.

\begin{lemma}\label{lem:continuous-on-graph-implies-continuous-sections}
Consider an MDP $(\mathcal{S}, \mathcal{A}, C, f, r, \gamma)$ and a policy class $\Pi$ satisfying Assumption \ref{as:general}. For any state $s \in \mathcal{S}$, the $s$-section $f_s: C(s) \to \mathcal{S}$ is continuous. Additionally, for any state $s \in \mathcal{S}$ and any closed set $G \subseteq \mathcal{S}$, the preimage $(f_s)^{-1}(G)$ is a closed subset of $\mathcal{A}$.
\end{lemma}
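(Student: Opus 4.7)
The plan is to treat both claims as essentially topological consequences of the fact that $f$ is continuous on $\mathrm{Graph}(C) \subseteq \mathcal{S} \times \mathcal{A}$ and that $C(s)$ is a closed subset of $\mathcal{A}$. Under Assumption \ref{as:general}, $C(s)$ inherits a metric from $\mathcal{A}$ and $\mathrm{Graph}(C)$ inherits a metric from the product $\mathcal{S} \times \mathcal{A}$, so the statement about continuous sections for functions defined on product spaces (mentioned in the notation preliminaries) does not apply verbatim; I adapt it by factoring through $\mathrm{Graph}(C)$.

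For the first claim, I would exhibit $f_s$ as a composition of continuous maps. Define the embedding $\iota_s: C(s) \to \mathrm{Graph}(C)$ by $\iota_s(a) = (s, a)$. Its projection onto $\mathcal{S}$ is the constant map to $s$ and its projection onto $\mathcal{A}$ is the inclusion $C(s) \hookrightarrow \mathcal{A}$; both are continuous, so $\iota_s$ is continuous into $\mathcal{S} \times \mathcal{A}$, and since its image lies in $\mathrm{Graph}(C)$, it is continuous into $\mathrm{Graph}(C)$ with the subspace topology. The section $f_s$ is precisely $f \circ \iota_s$, so continuity of $f_s$ follows from continuity of $f$ on $\mathrm{Graph}(C)$.

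For the second claim, since $f_s: C(s) \to \mathcal{S}$ is continuous and $G \subseteq \mathcal{S}$ is closed, the preimage $(f_s)^{-1}(G)$ is closed as a subset of $C(s)$ with the subspace topology. Concretely, this means there exists a closed set $F \subseteq \mathcal{A}$ with $(f_s)^{-1}(G) = C(s) \cap F$. By Assumption \ref{as:general}, $C(s)$ is closed in $\mathcal{A}$, and the intersection of two closed sets in $\mathcal{A}$ is closed in $\mathcal{A}$, so $(f_s)^{-1}(G)$ is closed in $\mathcal{A}$, as required.

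There is no real obstacle here; the only point that needs care is that continuity and closedness are computed relative to the correct topology, and the transfer of closedness from $C(s)$ to $\mathcal{A}$ depends critically on the closed-valued hypothesis on $C$ in Assumption \ref{as:general}. The closedness of $\mathrm{Graph}(C)$ in $\mathcal{S} \times \mathcal{A}$ is not needed for this lemma, but the closedness of each fiber $C(s)$ is.
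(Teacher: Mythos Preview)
Your proof is correct and follows essentially the same approach as the paper: both factor $f_s$ as $f$ composed with the continuous embedding $a \mapsto (s,a)$ from $C(s)$ into $\mathrm{Graph}(C)$ (the paper writes this embedding as the restriction to $C(s)$ of the $s$-section of the identity on $\mathcal{S}\times\mathcal{A}$), and both finish by noting that closedness of $(f_s)^{-1}(G)$ in $C(s)$ upgrades to closedness in $\mathcal{A}$ because $C(s)$ is closed in $\mathcal{A}$. Your observation that only the closed-valuedness of $C$ (not the closedness of $\mathrm{Graph}(C)$) is needed here is also accurate.
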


\begin{proof}
Let $\mathrm{id}: \mathcal{S} \times \mathcal{A} \to \mathcal{S} \times \mathcal{A}$ denote the identity function on $\mathcal{S} \times \mathcal{A}$. For any state $s \in \mathcal{S}$, the $s$-section $\mathrm{id}_s: \mathcal{A} \to \mathcal{S} \times \mathcal{A}$ is continuous, meaning the restriction $\left.\mathrm{id}_s\right|_{C(s)}: C(s) \to \mathcal{S} \times \mathcal{A}$ is continuous. We can write $f_s = f \circ (\left.\mathrm{id}_s\right|_{C(s)})$, implying $f_s$ is continuous. For any closed set $G \subseteq \mathcal{S}$, there is a corresponding closed set $F_{s, G} \subseteq \mathcal{A}$ satisfying $(f_s)^{-1}(G) = C(s) \cap F_{s, G}$. Since $C(s)$ is also a closed subset of $\mathcal{A}$, we conclude that $(f_s)^{-1}(G)$ a closed subset of $\mathcal{A}$.
\end{proof}

We now present our guarantee of the existence of a well-posed value iteration settings under Assumption \ref{as:general}.

\begin{theorem}
\label{thm:invariance}
Consider a CR-MPOP characterized by an MDP $(\mathcal{S}, \mathcal{A}, C, f, r, \gamma)$ and a policy class $\Pi$ that satisfies Assumption \ref{as:general}. There exists an MPOP characterized by an MDP $(\mathcal{S}_0, \mathcal{A}_0, C_0, f_0, r_0, \gamma)$ and a policy class $\Pi_0$ satisfying Assumption \ref{as:well-posed}, with $\mathcal{S}_0 \subseteq \mathcal{S}$ rendered forward-invariant by a continuous policy in $\Pi$, $\mathcal{A}_0 \subseteq \mathcal{A}$, $C_0: \mathcal{S}_0 \to \mathcal{P}(\mathcal{A}_0)$ satisfying $C_0(s) \subseteq C(s)$ for all states $s \in \mathcal{S}_0$, $f_0 = \left.f\right|_{\mathrm{Graph}(C_0)}$, $r_0 = \left.r\right|_{\mathrm{Graph}(C_0)}$, and:
\begin{equation}
    \Pi_0 = \{ \left.\pi\right|_{\mathcal{S}_0} : \pi \in \Pi, \pi(\mathcal{S}_0) \subseteq \mathcal{A}_0, ~\mathrm{and}~ f_\pi(\mathcal{S}_0) \subseteq \mathcal{S}_0 \}.
\end{equation}
\end{theorem}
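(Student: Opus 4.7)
The plan is to use the CR-MPOP data: a nonempty compact $\mathcal{S}_0 \subseteq \mathcal{S}$ together with a continuous selector $\pi_0 \in \Pi$ satisfying $f_{\pi_0}(\mathcal{S}_0) \subseteq \mathcal{S}_0$. I set $\mathcal{A}_0 = \pi_0(\mathcal{S}_0)$, which is compact as the continuous image of a compact set, and define
\begin{equation*}
    C_0(s) = \{ a \in C(s) \cap \mathcal{A}_0 : f(s, a) \in \mathcal{S}_0 \}
\end{equation*}
for $s \in \mathcal{S}_0$, taking $f_0$ and $r_0$ to be the restrictions of $f$ and $r$ to $\mathrm{Graph}(C_0)$. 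Nonemptiness of $C_0(s)$ is immediate, since $\pi_0(s) \in C_0(s)$ for every $s \in \mathcal{S}_0$.

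Next I verify the non-policy clauses of Assumption \ref{as:well-posed} for the restricted MDP. Writing
\begin{equation*}
    \mathrm{Graph}(C_0) = \mathrm{Graph}(C) \cap f^{-1}(\mathcal{S}_0) \cap (\mathcal{S}_0 \times \mathcal{A}_0),
\end{equation*}
this is a closed subset of the compact product $\mathcal{S}_0 \times \mathcal{A}_0$ (using that $\mathrm{Graph}(C)$ is closed by Assumption \ref{as:general}, $f$ is continuous, and $\mathcal{S}_0$ is compact hence closed), so $\mathrm{Graph}(C_0)$ is compact. Compactness of $\mathrm{Graph}(C_0)$ gives compact values of $C_0$ as its $x$-sections, and yields upper hemicontinuity by the standard projection argument: for closed $G \subseteq \mathcal{A}_0$, the lower preimage $C_0^\ell(G)$ is the image of the compact set $\mathrm{Graph}(C_0) \cap (\mathcal{S}_0 \times G)$ under the continuous projection onto $\mathcal{S}_0$, hence compact and in particular closed. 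Continuity of $f_0$ and $r_0$ is inherited by restriction, and continuity of $r$ on the compact set $\mathrm{Graph}(C_0)$ makes $r_0$ bounded, hence also upper semicontinuous.

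It remains to identify $\Pi_0$ with the set of all measurable selectors of $C_0$, and to verify the pointwise $\pi_{s,a}$ condition. The inclusion $\Pi_0 \subseteq \{\text{measurable selectors of } C_0\}$ is immediate from restriction. For the reverse inclusion, given a measurable selector $\pi' : \mathcal{S}_0 \to \mathcal{A}_0$ of $C_0$, I define $\pi(s) = \pi'(s)$ on $\mathcal{S}_0$ and $\pi(s) = \pi_0(s)$ on $\mathcal{S} \setminus \mathcal{S}_0$. Closedness of $\mathcal{S}_0$ together with measurability of $\pi'$ and $\pi_0$ gives measurability of $\pi$, and $\pi$ is a selector of $C$ since $\pi'$ selects $C_0 \subseteq C$ while $\pi_0$ selects $C$; the range and forward-invariance conditions then hold by construction, so $\pi|_{\mathcal{S}_0} = \pi' \in \Pi_0$. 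For the last clause of Assumption \ref{as:well-posed}, the projection of $\mathrm{Graph}(C_0)$ onto $\mathcal{A}_0$ sits inside compact $\mathcal{A}_0$ and is itself the continuous image of compact $\mathrm{Graph}(C_0)$, hence compact, so Lemma \ref{lem:relax-assumption} (as used in Remark \ref{rem:relax-assumption}) provides the required $\pi_{s,a}$. I expect the extension step in the policy class characterization to be the main subtlety, since it is where the CR-MPOP hypothesis that $\pi_0$ is globally defined on $\mathcal{S}$ is used; the continuity of $\pi_0$ (beyond mere measurability) enters earlier, ensuring that $\mathcal{A}_0 = \pi_0(\mathcal{S}_0)$ is compact and thereby driving the rest of the argument.
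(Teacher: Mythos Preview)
Your proposal is correct and follows essentially the same approach as the paper's proof: the paper also sets $\mathcal{A}_0$ as the (union of) image(s) of $\mathcal{S}_0$ under continuous forward-invariant policies, defines $C_0(s)=\{a\in C(s)\cap\mathcal{A}_0: f(s,a)\in\mathcal{S}_0\}$, establishes compactness of $\mathrm{Graph}(C_0)$ and upper hemicontinuity of $C_0$ via the same projection argument, verifies the two inclusions for $\Pi_0$ by the same restriction/extension construction, and closes with Lemma~\ref{lem:relax-assumption}. The only cosmetic differences are that the paper allows finitely many such policies $\pi_1,\dots,\pi_n$ in place of the single $\pi_0$ (your $n=1$ instance suffices for the theorem as stated), and the paper invokes Lemma~\ref{lem:continuous-on-graph-implies-continuous-sections} to get compact values before compactness of the graph, whereas you argue in the reverse order.
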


\begin{proof}
For some $n \in \N$, consider continuous policies $\pi_1, \dots, \pi_n \in \Pi$ satisfying $f_{\pi_i}(\mathcal{S}_0) \subseteq \mathcal{S}_0$ for all $i \in \{ 1, \dots, n \}$. Such policies are guaranteed to exist by the existence of the continuous policy $\pi_0 \in \Pi$. Consider the union of the images of $\mathcal{S}_0$ under each of these policies, defining the compact set $\mathcal{A}_0 = \bigcup_{i = 1}^n \pi_i(\mathcal{S}_0)$.
Since $\mathcal{S}$ and $\mathcal{A}$ are separable metric spaces, so are $\mathcal{S}_0$ and $\mathcal{A}_0$ \cite[Corollary 3.5]{aliprantis2006infinite}.

Define the correspondence $C_0: \mathcal{S}_0 \to \mathcal{P}(\mathcal{A}_0)$ as:
\begin{equation}
    C_0(s) = \{ a \in C(s) \cap \mathcal{A}_0: f(s, a) \in \mathcal{S}_0 \} = (f_s)^{-1}(\mathcal{S}_0) \cap \mathcal{A}_0,
\end{equation}
for all states $s \in \mathcal{S}_0$. 
Since $\pi_1(s), \dots, \pi_n(s) \in C_0(s)$, $C_0(s) \neq \emptyset$  for all $s \in \mathcal{S}_0$.
By Lemma \ref{lem:continuous-on-graph-implies-continuous-sections} and since $\mathcal{S}_0$ is closed (it is compact in a metric space), the preimage $(f_s)^{-1}(\mathcal{S}_0)$ is a closed subset of $\mathcal{A}$. $C_0(s)$ is thus compact, as it is a closed subset of $\mathcal{A}_0$. For any closed set $G \subseteq \mathcal{A}_0$, the lower preimage of $G$ under $C_0$ satisfies:
\begin{align}
    &~(C_0)^\ell(G) = \{ s \in \mathcal{S}_0: f(s, a) \in \mathcal{S}_0 ~\mathrm{for~some}~ a \in C(s) \cap G \}\nonumber\\
    &~\qquad= p_\mathcal{S}( f^{-1}(\mathcal{S}_0) \cap (\mathcal{S}_0 \times G) ),
\end{align}
where $p_{\mathcal{S}}: \mathcal{S} \times \mathcal{A} \to \mathcal{S}$ denotes the canonical projection onto $\mathcal{S}$. Since $\mathcal{S}_0$ is closed and $f$ is continuous, there is a closed set $F \subseteq \mathcal{S} \times \mathcal{A}$ satisfying $f^{-1}(\mathcal{S}_0) = \mathrm{Graph}(C) \cap F$. Since $\mathrm{Graph}(C)$ is also closed, $f^{-1}(\mathcal{S}_0)$ is a closed subset of $\mathcal{S} \times \mathcal{A}$. Since $G$ is a closed subset of the compact set $\mathcal{A}_0$, $G$ is compact, and since $\mathcal{S}_0$ is compact, the product $\mathcal{S}_0 \times G$ is compact. As a closed subset of a compact set, $f^{-1}(\mathcal{S}_0) \cap (\mathcal{S}_0 \times G)$ is compact. Since the projection operator $p_{\mathcal{S}}$ is continuous, $(C_0)^\ell(G)$ is compact. Therefore, $(C_0)^\ell(G)$ is a closed subset of $\mathcal{S}_0$, and since $G$ was arbitrary, $C_0$ is upper hemicontinuous.

As restrictions of continuous functions, $f_0$ and $r_0$ are continuous, implying $r_0$ is upper semicontinuous. Note that:
\begin{align}
    \mathrm{Graph}(C_0) &= \{ (s, a) \in \mathcal{S}_0 \times \mathcal{A}_0: f(s, a) \in \mathcal{S}_0 \}\nonumber\\
    &= f^{-1}(\mathcal{S}_0) \cap (\mathcal{S}_0 \times \mathcal{A}_0).
\end{align}
Since $\mathcal{A}_0$ is trivially a closed subset of $\mathcal{A}_0$, we have already shown that $f^{-1}(\mathcal{S}_0) \cap (\mathcal{S}_0 \times \mathcal{A}_0)$ is compact, meaning $\mathrm{Graph}(C_0)$ is compact. Therefore, since $r_0$ is continuous, it is bounded. 

The policy class $\Pi_0$ satisfies:
\begin{align}
    &\Pi_0 = \{ \left.\pi\right\vert_{\mathcal{S}_0}: \pi \in \Pi, \pi(s) \in C_0(s) ~\mathrm{for~all~states}~ s \in \mathcal{S}_0 \}\nonumber\\
    &= \{ \pi \in \mathcal{L}^0(\mathcal{S}_0; \mathcal{A}_0): \pi(s) \in C_0(s) ~\mathrm{for~all~states}~ s \in \mathcal{S}_0 \}.
\end{align}
To verify the last equality, first consider a policy $\pi \in \Pi$ satisfying $\pi(s) \in C_0(s)$ for all states $s \in \mathcal{S}_0$. The restriction $\left.\pi\right|_{\mathcal{S}_0}$ is a measurable function from $\mathcal{S}_0$ to $\mathcal{A}_0$ selecting from $C_0$. Conversely, consider a measurable function $\pi: \mathcal{S}_0 \to \mathcal{A}_0$ selecting from $C_0$. Pick any policy $\pi_e \in \Pi$ and define the extension $\bar{\pi}: \mathcal{S} \to \mathcal{A}$
as $\pi$ on $\mathcal{S}_0$ and $\pi_e$ on $\mathcal{S} \setminus \mathcal{S}_0$. For any measurable set $B \subseteq \mathcal{A}$, we have:
\begin{align}
    &\bar{\pi}^{-1}(B) = \{ s \in \mathcal{S}_0: \pi(s) \in \mathcal{A}_0 \cap B \} \cup \{ s \in \mathcal{S} \setminus \mathcal{S}_0: \pi_e(s) \in B \}\nonumber\\
    &~= \pi^{-1}(\mathcal{A}_0 \cap B) \cup (\pi_e^{-1}(B) \setminus \mathcal{S}_0),
\end{align}
which is a measurable set; therefore, $\bar{\pi}$ is a measurable function. Note that $\bar{\pi}(s) = \pi(s) \in C_0(s) \subseteq C(s)$ for all states $s \in \mathcal{S}_0$ and $\bar{\pi}(s) = \pi_e(s) \in C(s)$ for all states $s \notin \mathcal{S}_0$. Since $\bar{\pi}$ is a measurable selector of $C$, we have $\bar{\pi} \in \Pi$, and inclusion follows since $\left.\bar{\pi}\right|_{\mathcal{S}_0} = \pi$.

Finally, since $\mathrm{Graph}(C_0)$ is compact, its projection onto $\mathcal{A}_0$ is compact. Therefore, by Lemma \ref{lem:relax-assumption}, for every state-action pair $(s, a) \in \mathrm{Graph}(C_0)$, there is a policy $\pi_{s, a} \in \Pi_0$ satisfying $\pi_{s, a}(s) = a$.
\end{proof}

\begin{remark}
An MPOP characterized by an MDP $(\mathcal{S}, \mathcal{A}, C, f, r, \gamma)$ and a policy class $\Pi$ that satisfies Assumption \ref{as:well-posed} is trivially a CR-MPOP satisfying Assumption \ref{as:general} if the following sufficient conditions are met:
\begin{inparaenum}[1)]
    \item The state space $\mathcal{S}$ is compact,
    \item The reward function $r$ is continuous,
    \item The policy class $\Pi$ contains a continuous policy.
\end{inparaenum}
\end{remark}

\begin{example}
Consider the MDP $(\mathcal{S}, \mathcal{A}, C, f, r, \gamma)$ defined in Example \ref{ex:running-example} with a policy class $\Pi$ comprised of measurable selectors of $C$, and let $\mathcal{S}_0 = [-1, 1]$. Note that $| f(0, a) | = | \tanh{(a)} | < 1$ for all actions $a \in \R$. Consider any state $s \in [-1, 0)$. For any nonnegative action $a \in \R_+$, we have $-1 \leq s \leq f(s, a) = s + \tanh{(a)} < s + 1 < 1$, and $f(s, -a) = s + \tanh{(-a)} \in [-1, s)$ if and only if $-a \in [\tanh^{-1}{(-1 - s)}, 0)$ since $\tanh^{-1}$ is monotonically increasing. By a similar argument for states in $(0, 1]$, we determine that, for all $s \in \mathcal{S}_0$ and $a \in \mathcal{A}$, we have $f(s, a) \in \mathcal{S}_0$ if and only if:
\begin{equation}\label{eq:fi-condition}
    a \in \begin{cases} [\tanh^{-1}{(-1 - s)}, \infty) & -1 \leq s < 0,\\
    \mathcal{A} & s = 0,\\
    (-\infty, \tanh^{-1}{(1 - s)}] & 0 < s \leq 1.
    \end{cases}
\end{equation}

\begin{figure}
    \centering
    \includegraphics[width=0.35\textwidth]{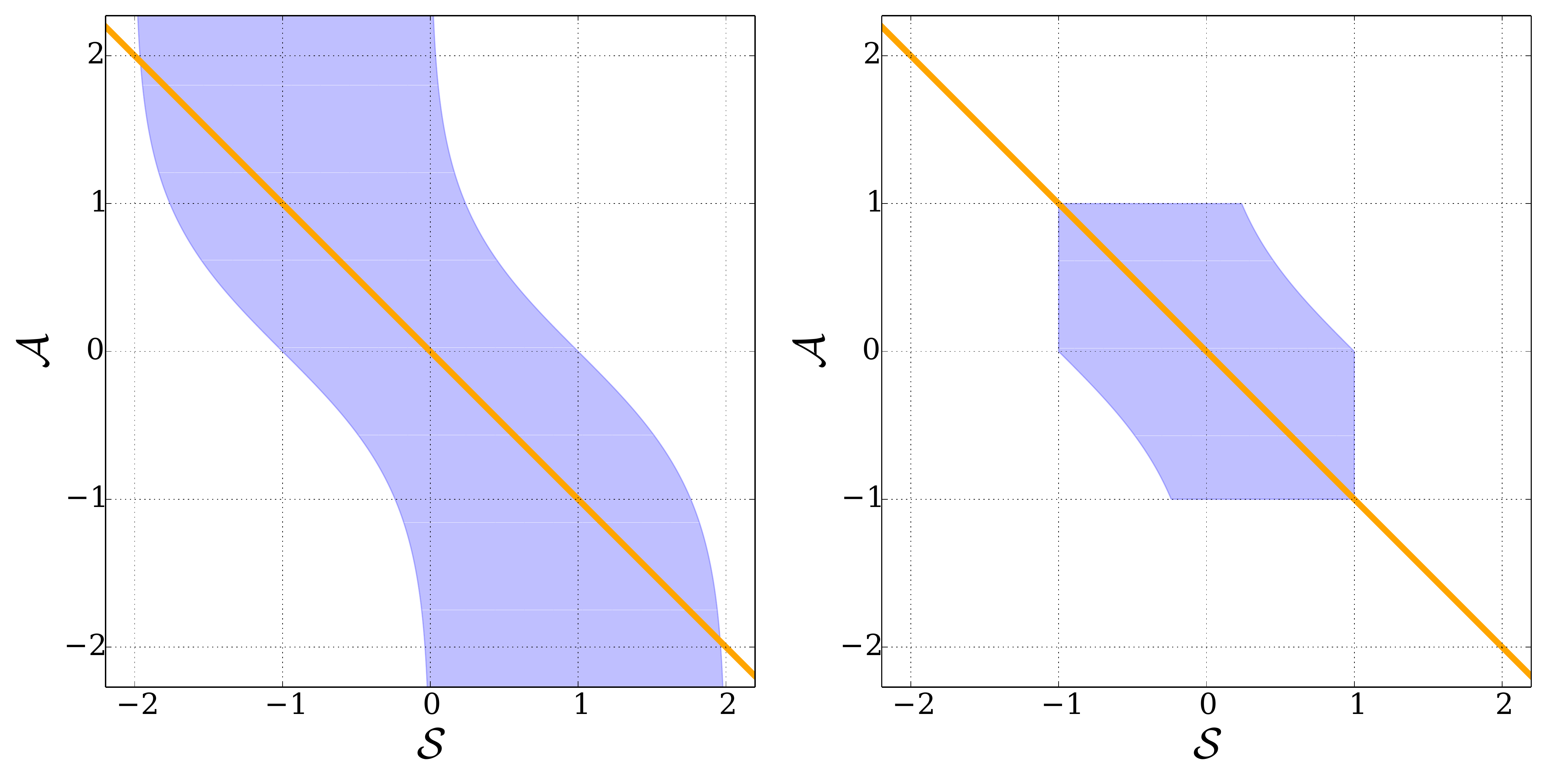}
    \caption{(Left) The preimage $f^{-1}(\mathcal{S}_0)$ is shown in blue, which contains all state-action pairs mapped into $\mathcal{S}_0 = [-1, 1]$ by $f$. For any state in $\mathcal{S}_0$, the set of actions mapping that state into $\mathcal{S}_0$ is unbounded, therefore not compact. (Right) The graph of $\pi_0$ is shown in orange and the graph of $C_0$ is shown in blue. The graph of $C_0$ is compact, and for every state $s \in \mathcal{S}_0$, the set $C_0(s)$ is compact.}
    \label{fig:compact-construction}
\end{figure}

A correspondence coinciding with the requirements on actions in \eqref{eq:fi-condition} would not have compact values; this is illustrated by the preimage $f^{-1}(\mathcal{S}_0)$ in Figure \ref{fig:compact-construction}. Therefore, consider the continuous policy $\pi_0 \in \Pi$ defined as $\pi_0(s) = -s$ for all states $s \in \mathcal{S}$. For any state $s \in [-1, 0)$, we have $\pi_0(s) = -s > 0 \geq \tanh^{-1}(-1 - s)$.
Similarly, for any state $s \in (0, 1]$, we have $\pi_0(s) < \tanh^{-1}(1 - s)$. Therefore, $f_{\pi_0}(\mathcal{S}_0) \subseteq \mathcal{S}_0$. The image of $\mathcal{S}_0$ under $\pi_0$ is $[-1, 1]$. Therefore, we can define $\mathcal{A}_0 = [-1, 1]$ and $C_0: \mathcal{S}_0 \to \mathcal{P}(\mathcal{A}_0)$ as, for all states $s \in \mathcal{S}_0$:
\begin{equation}
    C_0(s) = \begin{cases}
        [\tanh^{-1}(-1 - s), 1] & s \leq -1 - \tanh{(-1)},\\
        [-1, \tanh^{-1}(1 - s)] & s \geq 1 - \tanh{(1)},\\
        [-1, 1] & \mathrm{otherwise}.
    \end{cases}
\end{equation}
\end{example}

\subsection{Policy Iteration}

We briefly mention the difficulties of policy iteration for MPOPs, as noted in \cite{yu2015mixed}. For a well-posed MPOP, policy iteration generates a monotonically nondecreasing sequence of policies. Given an initial policy $\pi_0 \in \Pi$, we seek a sequence of policies $\{ \pi_n \in \Pi: n \in \N \}$ satisfying, for all states $s \in \mathcal{S}$ and $n \in \Z_+$:
\begin{align}\label{eq:policy-improvement-step}
    &r(s, \pi_{n + 1}(s)) + \gamma V_{\pi_n}(f(s, \pi_{n + 1}(s)))\nonumber\\
    &~\qquad\qquad= \sup_{a \in C(s)}{\{ r(s, a) + \gamma V_{\pi_n}(f(s, a)) \}}.
\end{align}
If $f$, $r$, and $\pi_0$ are continuous, then so are $f_{\pi_0}$ and $r_{\pi_0}$ which we can show renders $V_{\pi_0}$ continuous. This implies $r + \gamma V_{\pi_0} \circ f$ is continuous. If $r$ is bounded, then so are $V_{\pi_0}$ and $r + \gamma V_{\pi_0} \circ f$. Since $r + \gamma V_{\pi_0} \circ f$ is upper semicontinuous and bounded, we can show (using \cite{feinberg2012average} or the appendix) that for any state $s \in \mathcal{S}$, the optimization problem:
\begin{equation}
    \sup_{a \in C(s)}{\{ r(s, a) + \gamma V_{\pi_0}(f(s, a)) \}},
\end{equation}
is solved by the next policy $\pi_1$, a measurable selector of $C$. However, we cannot conclude that $r + \gamma V_{\pi_1} \circ f$ is upper semicontinuous and bounded, so the same argument cannot be applied iteratively.

Demonstrating that policy iteration can be applied requires special knowledge that the policy improvement step in \eqref{eq:policy-improvement-step} continues to produce policies with value functions that permit maximization and appropriate selection (for the chosen policy class). Such knowledge is available in linear-quadratic regulator (LQR) problems with linear policy classes (see \cite{tu2019sample} for the discounted case).

\subsection{Computation}
There are 3 central difficulties in computationally implementing value iteration for a well-posed MPOP:
\begin{inparaenum}[1)]
    \item Representing iterates during value iteration,
    \item Determining admissible actions, and
    \item Approximating the update step \eqref{eq:value-iteration-step}.
\end{inparaenum}
Typically, iterates are represented using function approximators such as neural networks \cite{sutton2018reinforcement} or Gaussian process regression models \cite{kuss2003gaussian}. Training such approximators involves sampling sufficiently many states from the state space region of interest. For any given state, the maximization in the update step \eqref{eq:value-iteration-step} is generally a nonconvex optimization, with possible approximations including (projected) gradient ascent from several initial action seeds or maximization over a finite but sufficiently dense sampling of admissible actions. If the function approximation and optimization approximation errors can be controlled, approximate convergence of value iteration can be guaranteed \cite[Proposition 2.3.2]{bertsekas2022abstract}.

Any approximate optimization approach requires checking admissibility of actions, which, in the case of a CR-MPOP, requires checking whether or not a state-action pair is mapped to the correct compact set under the transition map. This necessitates efficient set membership checking, and for nonconvex compact sets, proximity-based membership approximations may be needed.

Finally, in Section \ref{sec:sampled-data-control}, we will use the closure of a reachable set (under a specific policy) to restrict problems with feedback linearizable control affine systems to well-posed MPOPs. To sample a state from the reachable set, we can sample a state from the appropriate set of initial conditions and follow the policy for a number of steps sampled from a geometric distribution (with success probability $1 - \gamma$). The resulting distribution is the discounted state distribution under the policy \cite{silver2014deterministic}, which is supported on the entirety of the reachable set (thus, on a dense subset of its closure).

There are many possible combinations of computational approaches that may be suitable for well-posed MPOPs; we deem the precise best choice outside the scope of this theoretical framing.

\section{Control Affine Systems}
\label{sec:control}
We now show how to apply the results from Section \ref{sec:mdp} to a general class of control systems called control affine systems \cite[\S 13.2.3]{lavalle2006planning}. For state and action space dimensions $d, m \in \N$, respectively, consider a set $\mathcal{D} \subseteq \R^d$ and vector fields $f_0, g_1, \dots, g_m: \mathcal{D} \to \R^d$. Define the matrix-valued function $G: \mathcal{D} \to \R^{d \times m}$ with columns $g_1, \dots, g_m$. Define $F: \mathcal{D} \times \R^m \to \R^d$ as:
\begin{equation}
    F(x, u) = f_0(x) + G(x)u,
\end{equation}
for all $x \in \mathcal{D}$ and $u = (u_1, \dots, u_m) \in \R^m$. An initial value problem with constant control input $u \in \R^m$ is characterized by an initial condition $x \in \mathcal{D}$ and an open time interval $I \subseteq \R$ with $0 \in I$; a corresponding solution is a differentiable function $\phi: I \to \mathcal{D}$ satisfying $\phi(0) = x$ and $\dot{\phi}(t) = \frac{\mathrm{d}}{\mathrm{d}t}\phi(t) = F(\phi(t), u) = f_0(\phi(t)) + G(\phi(t))u$
for all times $t \in I$.

\subsection{Time Sampling}
In contrast to continuous-time control design, we consider sampled-data control design (see \cite{nevsic1999sufficient, taylor2021sampled, taylor2022safety}), in which initial value problems with constant control characterize the evolution of a system over fixed sample intervals, resulting in control input trajectories that are piecewise constant in time. Such assumptions are realistic for physical systems interacting with digital controllers, which measure states and compute control inputs at nearly fixed frequencies. This setting requires the time intervals over which solutions are defined to be sufficiently long, uniformly for all initial conditions and control inputs under consideration. 

Specifically, fix a sample period $h \in \R_{++}$, and define the subset $\mathcal{S}_h \subseteq \mathcal{D}$ and correspondence $C_h: \mathcal{S}_h \to \mathcal{P}(\R^m)$ such that the following properties are satisfied:
\begin{itemize}
    \item For every initial condition $x \in \mathcal{S}_h$, the corresponding set of control inputs $C_h(x) \subseteq \R^m$ is nonempty,
    \item For every  $(x, u) \in \mathrm{Graph}(C_h)$, there is a unique solution to the initial value problem characterized by initial condition $x$, control input $u$, and open time interval $I \subseteq \R$  with $[0, h] \subset I$, with the solution denoted $\phi_{x, u}: I \to \mathcal{D}$,
    \item $\mathcal{S}_h$ and $C_h$ are maximal (cannot be contained in a superset and containing correspondence)
\end{itemize}
Define $f_h: \mathrm{Graph}(C_h) \to \mathcal{D}$ for all pairs $(x, u) \in \mathrm{Graph}(C_h)$ as:
\begin{equation}
    f_h(x, u) = \phi_{x, u}(h).
\end{equation}

\begin{example}[Linear System]\label{ex:linear-system}
Let $\mathcal{D} = \R^d$. For matrices $A \in \R^{d \times d}$ and $B \in \R^{d \times m}$, suppose
$f_0(x) = Ax$ and $G(x) = B$ for all initial conditions $x \in \R^d$. Then $\mathcal{S}_h = \R^d$ and $C_h(x) = \R^m$ for all $x \in \R^d$. Moreover, $f_h$ can be expressed explicitly as a linear function in terms of the matrix exponential of $A$.
\end{example}

\begin{example}[Continuously Differentiable and Lipschitz Continuous Vector Fields]\label{ex:diff-and-lip}
Suppose $\mathcal{D} = \R^d$ and $f_0, g_1, \dots, g_m$ are continuously differentiable and (globally) Lipschitz continuous. For a control input $u \in \R^m$, since the $u$-section $F^u: \R^d \to \R^d$ is a linear combination of continuously differentiable and Lipschitz continuous vector fields, it is continuously differentiable and Lipschitz continuous. Therefore, for any initial condition $x \in \R^d$ the initial value problem characterized by initial condition $x$, control input $u$, and time interval $\R$ has a unique solution \cite[Theorem 3.1.3]{perko2013differential}. Therefore, $\mathcal{S}_h = \R^d$ and $ C_h(x) = \R^m$ for all initial conditions $x \in \R^d$. In general, $f_h$ does not have an closed-form representation.
\end{example}

\begin{assumption}\label{as:control-affine-well-posed}
The vector fields $f_0, g_1, \dots, g_m$ are locally Lipschitz continuous. Moreover, there exist subsets $\mathcal{S} \subseteq \mathcal{S}_h$ and $\mathcal{A} \subseteq \R^m$ and a correspondence $C: \mathcal{S} \to \mathcal{P}(\mathcal{A})$ such that:
\begin{itemize}
    \item For every initial condition $x \in \mathcal{S}$, the set of control inputs $C(x) \subseteq \mathcal{A}$ is nonempty with $C(x) \subseteq C_h(x)$,
    \item $\mathrm{Graph}(C)$ is a closed subset of $\mathcal{S} \times \mathcal{A}$,
    \item $f_h(\mathrm{Graph}(C)) \subseteq \mathcal{S}$.
\end{itemize}
\end{assumption}

Both Examples \ref{ex:linear-system} and \ref{ex:diff-and-lip} satisfy Assumption \ref{as:control-affine-well-posed} with $\mathcal{S} = \mathcal{S}_h$, $\mathcal{A} = \R^m$, and $C(x) = \mathcal{A}$ for all $x \in \mathcal{S}$. The assumption of local Lipschitz continuity in Assumption \ref{as:control-affine-well-posed} ensures that the restriction of $f_h$ to $\mathrm{Graph}(C)$ is continuous; this follows from \cite[Theorem 3.5]{khalil2002nonlinear} by treating control inputs as parameters.

\subsection{Robotic Systems}
Many robotic systems can be modeled as control affine systems, and we will demonstrate how to show that such systems can satisfy Assumption \ref{as:general}, establishing well-posedness of value iteration for a large class of practically relevant MPOPs. Examples of such systems include manipulators, automobiles, aircraft, and spacecraft \cite{murray1994mathematical, olfati2001nonlinear}.

\subsubsection{Dynamics}
An unconstrained robotic system with $n \in \N$ degrees of freedom is characterized by an $n$-dimensional $\mathcal{C}^2$ manifold $\mathcal{Q}$ called the configuration manifold. In this work, we consider open subsets $\mathcal{Q} \subseteq \R^n$ for which we can identify the tangent bundle $T\mathcal{Q}$ with $\mathcal{Q} \times \R^n$. The inertia matrix function $D: \mathcal{Q} \to \bb{S}^n_{++}$ characterizes the kinetic energy function $T: \mathcal{Q} \times \R^n \to \R_{++}$, defined as $T(q, \dot{q}) = \frac{1}{2}\dot{q}^\top D(q)\dot{q}$ for all configurations $q \in \mathcal{Q}$ and velocities $\dot{q} \in \R^n$. The assumption that $D$ takes positive definite values ensures that no configurations exist that admit arbitrarily large velocities without affecting the kinetic energy of the system. We require $D$ to be differentiable, allowing us to express the matrix-valued function $C: \mathcal{Q} \times \R^n \to \R^{n \times n}$ of Coriolis and centrifugal terms\footnote{The use of $C$ for this term is standard, but we will not use it in the same context as an action-admissibility correspondence.} as:
\begin{equation}
    (C(q, \dot{q}))_{ij} = \sum_{k = 1}^n \left(\frac{\partial}{\partial q_j} ( D(q) )_{ik} - \frac{1}{2} \frac{\partial}{\partial q_i} ( D(q) )_{jk} \right) \dot{q}_k,
\end{equation}
for all configurations $q = (q_1, \dots, q_n) \in \mathcal{Q}$, velocities $\dot{q} = (\dot{q}_1, \dots, \dot{q}_n) \in \R^n$, and indices $i, j \in \{ 1, \dots, n \}$ \cite[Equation 4.21]{murray1994mathematical}. We also require the potential energy function $U: \mathcal{Q} \to \R^n$ to be differentiable, and we denote external nonconservative forces and torques applied to the system with the vector-valued function $F_{\mathrm{ext}}: \mathcal{Q} \times \R^n \to \R^n$. Finally, if the system is controlled with $m \in \N$ actuators, then $B: \mathcal{Q} \to \R^{n \times m}$ denotes the actuation matrix function, converting control inputs to forces and torques. With $d = 2n$ and $\mathcal{D} = \mathcal{Q} \times \R^n$, we define $F: \mathcal{D} \times \R^m \to \R^d$ as:
\begin{align}
    F(x, u) &= \begin{bmatrix} \dot{q} \\ D(q)^{-1}( F_{\mathrm{ext}}(q, \dot{q}) - C(q, \dot{q})\dot{q} - \nabla U(q) ) \end{bmatrix}\nonumber\\
    &~\qquad + \begin{bmatrix} 0_{n \times m} \\ D(q)^{-1}B(q) \end{bmatrix}u,
\end{align}
for all $x = (q, \dot{q}) \in \mathcal{Q} \times \R^n$ and $u \in \R^m$.

\subsubsection{Regularity}
We now establish regularity conditions that enable us to define a valid transition function that can be used in an MDP. We first establish sufficient conditions such that every initial condition and control input under consideration correspond to initial value problems with unique solutions. We then establish sufficient conditions for these solutions to exist for all nonnegative time.

\begin{assumption}\label{as:robotic-well-posed}
The configuration manifold satisfies $\mathcal{Q} = \R^n$. The functions $D$, $C$, $\nabla U$, $F_{\mathrm{ext}}$, and $B$ are each locally Lipschitz continuous. There is a strictly positive lower bound $\lambda_{\mathrm{\min}}$ such that $\lambda_{\min} I_n \preceq D(q)$ for all configurations $q \in \R^n$. There is some closed set $G \subseteq \R^m$ such that for any control input $u \in G$ there are corresponding constants $c_0 \in \R_+$ and $c_1 \in \R_{++}$ such that:
\begin{equation}
    \| F_{\mathrm{ext}}(q, \dot{q}) - \nabla U(q) + B(q)u \|_2 \leq c_0 + c_1 \| \dot{q} \|_2,
\end{equation}
for all configurations $q \in \R^n$ and velocities $\dot{q} \in \R^n$.
\end{assumption}

\begin{remark}
If the derivative of $D$ is locally Lipschitz continuous, then so is $D$ itself as it is continuously differentiable, and so is $C$ as it is bilinear in the derivative of $D$ and the velocities. If $D$ is twice continuously differentiable, then the derivative of $D$ is locally Lipschitz continuous, implying $D$ and $C$ are as well. If $U$ is twice continuously differentiable, then $\nabla U$ is locally Lipschitz continuous.
\end{remark}

\begin{remark}
If $B$ is bounded by some $M \in \R_+$ and there are constants $d_0 \in \R_+$ and $d_1 \in \R_{++}$ such that:
\begin{equation}
    \| F_{\mathrm{ext}}(q, \dot{q}) - \nabla U(q) \|_2 \leq d_0 + d_1 \| \dot{q} \|_2,
\end{equation}
for all configurations $q \in \R^n$ and velocities $\dot{q} \in \R^n$, then for any $u \in \R^m$, we have:
\begin{equation}
    \| F_{\mathrm{ext}}(q, \dot{q}) - \nabla U(q) + B(q)u \|_2 \leq d_0 + M \| u \|_2 + d_1 \| \dot{q} \|_2,
\end{equation}
for all configurations $q \in \R^n$ and velocities $\dot{q} \in \R^n$. Therefore, choose $c_0 = d_0 + M \| u \|_2$ and $c_1 = d_1$, as well as $G = \R^m$.
\end{remark}

The proof of \cite[Theorem 9.8b]{rudin1976principles} shows that matrix inversion is locally Lipschitz continuous. Fixing a control input $u \in G$, the $u$-section $F^u$ is locally Lipschitz continuous as it is comprised of sums and products of locally Lipschitz continuous functions. Fix an initial configuration $q_0 \in \R^n$ and velocity $\dot{q}_0 \in \R^n$. There is a corresponding maximal open interval $I_{\max} \subseteq \R$ with $0 \in I_{\max}$ such that the initial value problem characterized by initial condition $(q_0, \dot{q}_0)$, control input $u$, and time interval $I_{\max}$ has a unique solution $\phi: I_{\max} \to \R^{2n}$ \cite[Theorem 2.4.1]{perko2013differential} (the proof of this theorem applies to locally Lipschitz continuous vector fields, not just continuously differentiable vector fields). Let $\psi: I_{\max} \to \R^n$ satisfy $\phi(t) = ( \psi(t), \dot{\psi}(t) )$ for all times $t \in I_{\max}$, and note that $\psi(0) = q_0$ and $\dot{\psi}(0) = \dot{q}_0$. Let $I_{\max}^+ = \R_+ \cap I_{\max}$. The kinetic energy satisfies:
\begin{align}
    &T(\psi(t), \dot{\psi}(t)) - T(q_0, \dot{q}_0)\nonumber\\
    &~ = \int_0^t \dot{\psi}(s)^\top ( F(\psi(s), \dot{\psi}(s)) - \nabla U(\psi(s)) + B(\psi(s))u ) ~\mathrm{d}s\nonumber\\
    &~\leq \int_0^t \| \dot{\psi}(s) \|_2 (c_0 + c_1 \| \dot{\psi}(s) \|_2) ~\mathrm{d}s,
\end{align}
for all times $t \in I_{\max}^+$. Since $T(q, \dot{q}) \geq \frac{1}{2} \lambda_{\min} \| \dot{q} \|_2^2$ for all configurations $q \in \R^n$ and velocities $\dot{q} \in \R^n$, we have:
\begin{align}
    &\left(\sqrt{\lambda_{\min}} \| \dot{\psi}(t) \|_2\right)^2 / 2 \leq \left(\sqrt{ 2T(q_0, \dot{q}_0) }\right)^2 / 2\nonumber\\
    &~\qquad\qquad + \int_0^t \frac{c_0 + c_1 \| \dot{\psi}(s) \|_2}{\sqrt{\lambda_{\min}}} (\sqrt{\lambda_{\min}} \| \dot{\psi}(s) \|_2) ~\mathrm{d}s,
\end{align}
for all times $t \in I_{\max}^+$. For any upper bound $t_f \in I_{\max}^+$ with $t_f > 0$, the functions $\phi_1, \phi_2: [0, t_f] \to \R_+$ defined as:
\begin{align}
    \phi_1(t) &= \sqrt{\lambda_{\min}} \| \dot{\psi}(t) \|_2, & \phi_2(t) = \frac{ c_0 + c_1 \| \dot{\psi}(t) \|_2 }{\sqrt{\lambda_{\min}}},
\end{align}
for all $t \in [0, t_f]$ are continuous and bounded, implying $\phi_2$ is absolutely integrable. Therefore, from \cite[Lemma 17]{ballard2000dynamics} (originally \cite[Lemma A.5]{brezis1973ope}), we have:
\begin{equation}
    \sqrt{ \lambda_{\min} } \| \dot{\psi}(t) \|_2 \leq \sqrt{ 2T(q_0, \dot{q}_0) } + \int_0^t \frac{c_0 + c_1 \| \dot{\psi}(s) \|_2}{\sqrt{\lambda_{\min}}} ~\mathrm{d}s,
\end{equation}
for all times $t \in [0, t_f]$. By the Gronwall-Bellman inequality \cite[Lemma A.1]{khalil2002nonlinear}, we have:
\begin{equation}\label{eq:gb-bound}
    \| \dot{\psi}(t) \|_2 \leq ( \sqrt{ 2T(q_0, \dot{q}_0) / \lambda_{\min} } + c_0 t / \lambda_{\min} ) e^{\frac{c_1}{\lambda_{\min}}t},
\end{equation}
for all times $t \in [0, t_f]$. Since $t_f$ was arbitrary, the bound \eqref{eq:gb-bound} holds for all times $t \in I_{\max}^+$.

We now use \eqref{eq:gb-bound} to show that $I_{\max}^+ = \R_+$. For contradiction, assume there is some time $t_f \in \R_{++}$ with $t_f \notin I_{\max}^+$. By upper bounding the right-hand side of \eqref{eq:gb-bound} by its value at the time $t_f$, we conclude that $\dot{\psi}$ is bounded on $I_{\max}^+$. Also:
\begin{align}
    &\| \psi(t) - q_0 \|_2 \leq \int_0^t \| \dot{\psi}(s) \|_2 ~\mathrm{d}s\nonumber\\
    &\qquad~\leq t_f \left( \sqrt{ 2T(q_0, \dot{q}_0) / \lambda_{\min} } + c_0 t_f / \lambda_{\min} \right) e^{\frac{c_1}{\lambda_{\min}}t_f},
\end{align}
for all times $t \in I_{\max}^+$; that is, $\psi$ is bounded on $I_{\max}^+$. Therefore, $\phi$ is bounded on $I_{\max}^+$, contradicting \cite[Theorem 2.4.3]{perko2013differential} (again, the proof of this theorem applies to locally Lipschitz continuous vector fields). This implies $I_{\max}^+ = \R_+$, meaning for any sample period $h \in \R_{++}$, there is an open interval $I \subseteq \R$ with $[0, h] \subset I$.

Thus, any robotic system satisfying Assumption \ref{as:robotic-well-posed} also satisfies Assumption \ref{as:control-affine-well-posed}, no matter which sample period $h$ is chosen.

\subsection{Sampled-Data Control}
\label{sec:sampled-data-control}
We now determine sufficient conditions for the existence of a continuous policy rendering a compact subset of the state space forward-invariant. This construction uses the methods of \cite{taylor2021sampled, taylor2022safety}, based on the original work \cite{nevsic1999sufficient} in sampled-data control.

A control affine system is full-state feedback linearizable if $\mathcal{D}$ is open and there exist a function $\Phi: \mathcal{D} \to \R^d$ that is a diffeomorphism between $\mathcal{D}$ and an open subset of $\R^d$, a continuous controller $k_{\mathrm{fbl}}: \R^d \times \R^m \to \R^m$ that accepts an auxiliary input, and a controllable pair $(A, B) \in \R^{d \times d} \times \R^{d \times m}$ such that:
\begin{equation}
    \frac{\partial \Phi}{\partial x} F(x, k_{\mathrm{fbl}}(x, v)) = A \Phi(x) + Bv,
\end{equation}
for all  $x \in \mathcal{D}$ and auxiliary control inputs $v \in \R^m$.

\begin{remark}
Any robotic system satisfying Assumption \ref{as:robotic-well-posed} with $n = m$ and $B(q)$ invertible for all configurations $q \in \R^n$ is full-state feedback linearizable. The feedback linearizing controller $k_{\mathrm{fbl}}: \R^d \times \R^n \to \R^n$ is defined as:
\begin{equation}
    k_{\mathrm{fbl}}(x, v) = B(q)^{-1} ( C(q, \dot{q})\dot{q} + \nabla U(q) - F_{\mathrm{ext}}(q, \dot{q}) + D(q)v ),
\end{equation}
for all $x = (q, \dot{q}) \in \R^n \times \R^n$ and $v \in \R^n$, and:
\begin{equation}
    F(x, k_{\mathrm{fbl}}(x, v)) = \begin{bmatrix} 0_{n \times n} & I_n \\ 0_{n \times n} & 0_{n \times n} \end{bmatrix} \begin{bmatrix} q \\ \dot{q} \end{bmatrix} + \begin{bmatrix} 0_{n \times n} \\ I_n \end{bmatrix}v,
\end{equation}
for all $x = (q, \dot{q}) \in \R^n \times \R^n$ and  $v \in \R^n$.
\end{remark}

Consider a full-state feedback linearizable control affine system, and suppose $0_d \in \mathcal{D}$ and $\Phi(0_d) = 0_d$. Consider any gain matrix $K \in \R^{m \times d}$ making $A - BK$ Hurwitz stable with all eigenvalues having strictly negative real parts. Suppose the system satisfies Assumption \ref{as:control-affine-well-posed} and $0_d \in \mathrm{int}(\mathcal{S})$. Additionally, suppose $\pi_0: \mathcal{S} \to \R^m$ defined as:
\begin{equation}
    \pi_0(x) = k_{\mathrm{fbl}}(x, -K\Phi(x)),
\end{equation}
for all states $x \in \mathcal{S}$ satisfies $\pi_0(x) \in C(x)$ for all states $x \in \mathcal{S}$. Since $k_{\mathrm{fbl}}$ and $\Phi$ are continuous, $\pi_0$ is continuous.

By \cite[Lemmas 3, 4]{taylor2021sampled}, there is a continuous (class $\mathcal{KL}$) function $\beta: \R_+ \times \R_+ \to \R_+$ with monotonically increasing $s$-sections $\beta^s$ for all $s \in \R_+$ and monotonically nonincreasing $r$-sections $\beta_r$ for all $r \in \R_+$, as well as some $\overline{R} \in \R_{++}$ and a bounded open set $N \subset \R^d$ with $0_d \in N$ and:
\begin{equation}
    \{ x \in \R^d: \| x \|_2 \leq \sup_{x' \in N} \beta(\| x' \|_2, 0) + \overline{R} \} \subseteq \mathcal{S}.
\end{equation}
Additionally, for any $R \leq \overline{R}$, if the sample period is sufficiently small, we have:
\begin{align}
    \| (f_h)_{\pi_0}^t(x) \|_2 &\leq \beta( \| x \|_2, th ) + R \leq \sup_{x' \in N} \beta(\| x' \|_2, 0) + \overline{R},
\end{align}
for all $x \in N$ and $t \in \Z_+$. Let $\mathcal{R} \subseteq \mathcal{S}$ denote the set of all states reachable from $N$ when following $\pi_0$, defined as $\mathcal{R} = \{ (f_h)_{\pi_0}^t(x) \in \mathcal{S}: x \in N, t \in \Z_+ \}$,
and note that $(f_h)_{\pi_0}(\mathcal{R}) \subseteq \mathcal{R}$ and $\mathrm{cl}(\mathcal{R}) \subseteq \mathcal{S}$. Since $(f_h)_{\pi_0}(\mathrm{cl}(\mathcal{R})) \subseteq \mathrm{cl}(\mathcal{R}) \subseteq \mathcal{S}$, we can set $\mathcal{S}_0 = \mathrm{cl}(\mathcal{R})$. The set $\mathcal{S}_0 \subseteq \mathcal{S}$ is closed and bounded since:
\begin{equation}
    \sup_{x \in N} \beta(\|x\|_2, 0) + \overline{R} = \beta( \sup_{x \in N} \| x \|_2, 0 ) + \overline{R} < \infty.
\end{equation}
Therefore, $\mathcal{S}_0$ is a compact set rendered forward-invariant by the continuous policy $\pi_0$, implying the system satisfies Assumption \ref{as:general}.

Finally, we can construct a compact subset of the state space in a similar manner by appealing to \cite[Theorem 3]{taylor2022safety}, employing the notion of practical safety. In this case, if a compact $0$-superlevel set of a family of sampled-data control barrier functions \cite[Definition 7]{taylor2022safety} can be rendered forward-invariant when the transition map is approximated by an appropriate Euler/Runge-Kutta scheme, then an enlarged (but bounded) set can be rendered forward-invariant for the exact transition map when the sample period is sufficiently small. The enlargement can be controlled by decreasing the sample period.

\section{Future Work}
There are many directions for future work. First, our results can be viewed as ``existence'' results, and set the stage for studying computational aspects of policy optimization in metric MDPs. Second, our results are applicable to infinite-dimensional state and action spaces, such as those encountered in the control of systems governed by partial differential equations. This setting provides new challenges, such as how to choose appropriate compact subsets of the state space, as well as further computational considerations. Third, and it would be interesting to also consider stochastic dynamics. Finally, it is important to study the interaction between policy optimization and model learning when a model must be derived from data.

\bibliographystyle{IEEEtran} 
\bibliography{refs}

\begin{IEEEbiography}[{\includegraphics[width=1in,height=1.25in,clip,keepaspectratio]{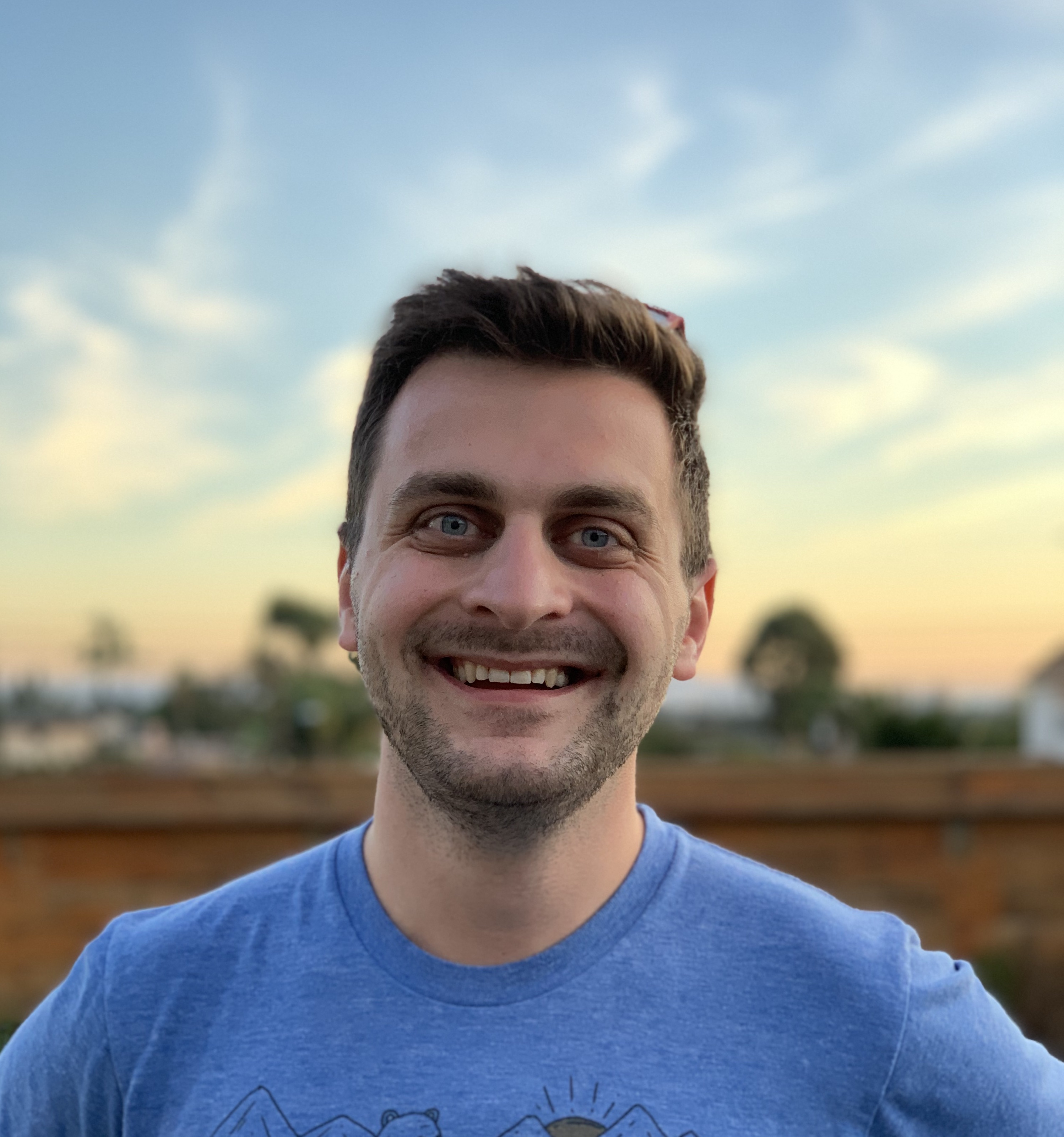}}]{Victor D. Dorobantu} was born in Stockholm, Sweden in 1995. He received the B.S. degree in CS from Cornell University in 2017, and is currently pursuing the Ph.D. degree in computing and mathematical sciences at the California Institute of Technology (Caltech). His research interests are reinforcement learning theory, geometry, and nonlinear control.
\end{IEEEbiography}

\begin{IEEEbiography}[{\includegraphics[width=1in,height=1.25in,clip,keepaspectratio]{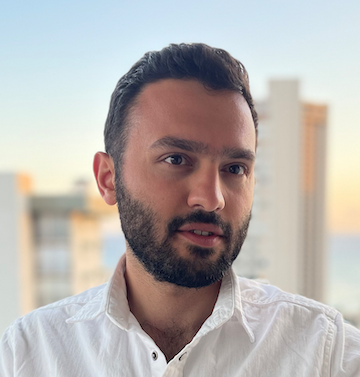}}]{Kamyar Azizzadenesheli} was born in Iran, in 1992. He received the B.S. degree in EE from the Sharif University of Technology, in 2014 and the M.S. and Ph.D. degrees in EE from the University of California, Irvine, CA, in 2019. From 2019 to 2020, he was a postdoctoral scholar at the California Institute of Technology. Since 2020, he has been an Assistant Professor with the Computer Science Department, Purdue University. He is the author of a digital book and more than 38 articles. His research interests include machine learning, from theory to practice.
\end{IEEEbiography}

\begin{IEEEbiography}[{\includegraphics[width=1in,height=1.25in,clip,keepaspectratio]{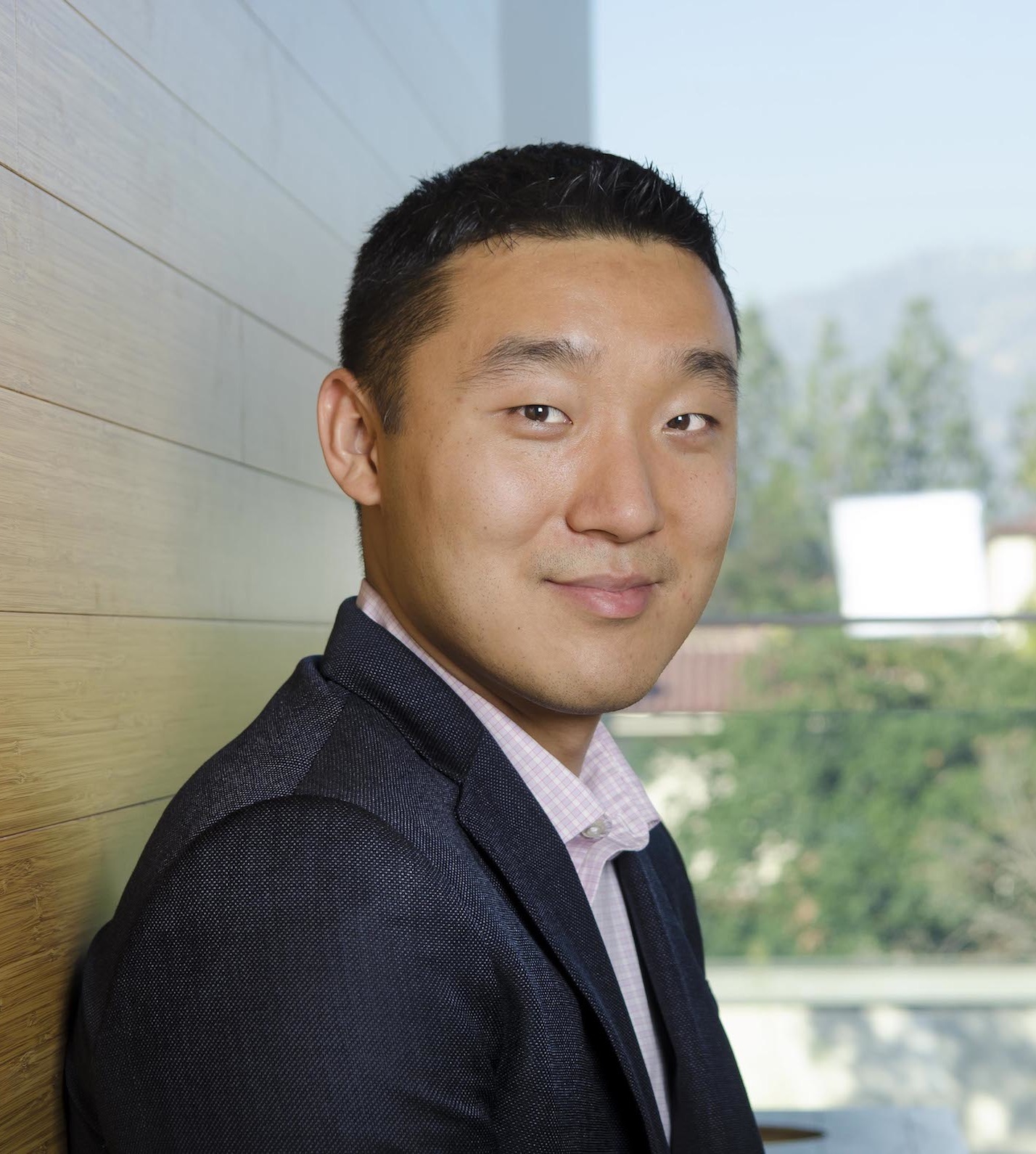}}]{Yisong Yue} was born in Beijing in 1983.  He received a B.S. in CS from the University of Illinois at Urbana-Champaign in 2005, and a Ph.D in CS from Cornell Universitry in 2010.  From 2010-2013, he was a postdoctoral scholar at Carnegie Mellon University.  From 2013-2014, he was a research scientist at Disney.  Since 2014, he has been on the faculty at the California Institute of Technology, where he is currently a full professor.  His research interests are in machine learning.
\end{IEEEbiography}

\section*{Appendix}

We offer alternative, direct proofs of Theorem \ref{thm:well-posed} and Lemma \ref{lem:relax-assumption}. We proceed as follows:
\begin{inparaenum}[1)]
    \item Introduce additional notation, conventions, and definitions,
    \item Show the measurability and boundedness of value functions under measurable policies,
    \item Introduce Bellman and optimal Bellman operators,
    \item Prove a selection theorem for bounded and upper semicontinuous functions on graphs of correspondences,
    \item Connect these results to proofs of Theorem \ref{thm:well-posed} and Lemma \ref{lem:relax-assumption}.
\end{inparaenum}

\subsection{Additional Notation, Conventions, and Definitions}
For a nonempty metric spaces $X$ and $Y$, the set of bounded measurable functions from $X$ to $\R$ is denoted $\mathcal{L}_b^0(X)$, which is a Banach space when equipped with the norm $\|\cdot\|_{\sup}: \mathcal{L}_b^0(X) \to \R$, defined as:
\begin{equation}
    \| f \|_{\sup} = \sup_{x \in X} | f(x) |,
\end{equation}
for all bounded measurable functions $f: X \to \R$. The set of bounded upper semicontinuous functions from $X$ to $\R$ is denoted $\mathcal{C}_b^u(X)$, the set of bounded continuous functions from $X$ to $\R$ is denoted $\mathcal{C}_b^0(X)$, and note that $\mathcal{C}_b^0(X) \subseteq \mathcal{C}_b^u(X) \subseteq \mathcal{L}_b^0(X)$.

For another nonempty metric space $Y$, consider a correspondence $C: X \to \mathcal{P}(Y)$. $C$ is measurable if the lower preimage of each closed set is a measurable set, and $C$ is weakly measurable if the lower preimage of each open set is a measurable set. If $C$ is measurable, then it is weakly measurable \cite[Lemma 18.2]{aliprantis2006infinite}.

The product $X \times Y$ is a metric space, with a topology generated by the basis $\{ U \times V: U \subseteq X ~\mathrm{and}~ V \subseteq Y ~\mathrm{are~open} \}$, and, if $X$ and $Y$ are separable, a $\sigma$-algebra generated by the collection $\{ A \times B: A \subseteq X ~\mathrm{and}~ B \subseteq Y ~\mathrm{are~measurable~sets} \}$ \cite[Appendix M]{billingsley2013convergence}.

For another nonempty metric space $Z$, consider a function $f: X \times Y \to Z$. If $f_x$ is continuous for each $x \in X$ and $f^y$ is measurable for each $y \in Y$, then $f$ is called a Carathéodory function; if $Y$ is also separable, then $f$ is measurable \cite[Lemma 4.51]{aliprantis2006infinite}.

In what follows, consider an MPOP characterized by an MDP $(\mathcal{S}, \mathcal{A}, C, f, r, \gamma)$ and a policy class $\Pi$ satisfying Assumption \ref{as:well-posed}.

\subsection{Measurability and Boundedness}
To establish the measurability of the value function under a policy $\pi \in \Pi$, we begin by establishing measurability of the corresponding closed-loop transition map $f_\pi$ and single-step reward function $r_\pi$. To this end, we define $z: \mathcal{S} \to \mathcal{S} \times \mathcal{A}$ as:
\begin{equation}
    z(s) = (s, \pi(s)),
\end{equation}
for all states $s \in \mathcal{S}$. Consider measurable sets $A \subseteq \mathcal{S}$ and $B \subseteq \mathcal{A}$; the preimage of the product $A \times B$ under $z$ is $z^{-1}(A \times B) = \{ s \in \mathcal{S}: s \in A, \pi(s) \in B \} = A \cap \pi^{-1}(B)$, which is a measurable set since $\pi$ is a measurable function. Since measurable products generate the $\sigma$-algebra on $\mathcal{S} \times \mathcal{A}$,  $z$ is measurable, implying the compositions $f_\pi = f \circ z$ and $r_\pi = r \circ z$ are measurable functions. It follows that for any $t \in \Z_+$, the $t$-iterated composition $f_\pi^t$ is a measurable function, as is the composition $r_\pi \circ f_\pi^t$. Note that $\mathcal{S}_\pi = \mathcal{S}$ since $r_\pi$ is bounded (by the same bound on $r$); therefore, the corresponding value function $V_\pi: \mathcal{S} \to \R$ is well-defined on all of $\mathcal{S}$. Moreover, $V_\pi$ is measurable since it is the pointwise limit of a sequence of partial sums of measurable functions. Finally, $V_\pi$ is bounded by $(1 - \gamma)^{-1}$ times the bound on $r$, so $V_\pi \in \mathcal{L}_b^0(\mathcal{S})$.

\subsection{Bellman Operators}
The Bellman operator $\mathcal{T}_\pi: \mathcal{L}_b^0(\mathcal{S}) \to \mathcal{L}_b^0(\mathcal{S})$ under policy $\pi$ generalizes the implicit structure of the value function $V_\pi$, satisfying:
\begin{equation}
    [\mathcal{T}_\pi(g)](s) = r_\pi(s) + \gamma g(f_\pi(s)),
\end{equation}
for all states $s \in \mathcal{S}$, for all bounded measurable functions $g: \mathcal{S} \to \R$. Indeed, for any bounded measurable function $g: \mathcal{S} \to \R$, the function $\mathcal{T}_\pi(g) = r_\pi + \gamma g \circ f_\pi$ is bounded and measurable. The Bellman operator $\mathcal{T}_\pi$ is a $\gamma$-contraction \cite[\S 1.4.1]{bertsekas2011dynamic}, so by the contraction mapping principle \cite[Theorem 3.48]{aliprantis2006infinite}, $V_\pi$ is the only bounded measurable function with $\mathcal{T}_\pi(V_\pi) = V_\pi$.

We use Bellman operators to characterize the optimal Bellman operator, which is commonly used in value iteration. Since each value function corresponding to a policy in $\Pi$ is bounded by $(1 - \gamma)^{-1}$ times the bound on $r$, the optimal value function $V^*$ is well-defined and bounded by the same bound. The optimal Bellman operator $\mathcal{T}: \mathcal{C}_b^u(\mathcal{S}) \to \mathcal{C}_b^u(\mathcal{S})$ satisfies:
\begin{equation}\label{eq:opt-bellman}
    [\mathcal{T}(g)](s) = \sup_{\pi \in \Pi}{ [\mathcal{T}_\pi(g)](s)} = \sup_{a \in C(s)}{( r(s, a) + \gamma g(f(s, a)) )},
\end{equation}
for all states $s \in \mathcal{S}$, for all bounded upper semicontinuous functions $g: \mathcal{S} \to \R$. We cannot define $\mathcal{T}$ on all of $\mathcal{L}_b^0(\mathcal{S})$, and ensuring that $\mathcal{T}$ is well-defined (the suprema are finite and equal) and has codomain $\mathcal{C}_b^u(\mathcal{S})$ is the subject of the following lemma.

\begin{lemma}\label{lem:usc-measurable-selection}
Suppose an MDP $(\mathcal{S}, \mathcal{A}, C, f, r, \gamma)$ and policy class $\Pi$ satisfy Assumption \ref{as:well-posed}. Let $\varphi: \mathrm{Graph}(C) \to \R$ be upper semicontinuous and bounded. The function $g_\varphi: \mathcal{S} \to \R$ specified as:
\begin{equation}
    g_\varphi(s) = \max_{a \in C(s)} \varphi(s, a),
\end{equation}
for all states $s \in \mathcal{S}$ is well-defined, upper semicontinuous, and bounded. Moreover, there is a policy $\pi_\varphi \in \Pi$ satisfying:
\begin{equation}
    \varphi(s, \pi_\varphi(s)) = g_\varphi(s),
\end{equation}
for all states $s \in \mathcal{S}$.
\end{lemma}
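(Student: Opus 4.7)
The plan is to establish the four claims of the lemma --- well-definedness, boundedness, upper semicontinuity of $g_\varphi$, and existence of a measurable maximizing selector --- in that order. The first three follow from classical arguments, while the last is the main technical step.

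For well-definedness and boundedness, I would fix $s \in \mathcal{S}$ and observe that the section $\varphi_s$ is upper semicontinuous as a restriction of an upper semicontinuous function to the compact set $C(s)$, so it attains its supremum (its top superlevel set is a nonempty closed subset of a compact set). Hence $g_\varphi(s)$ is well-defined, and the uniform bound on $|\varphi|$ bounds $|g_\varphi|$ by the same constant. For upper semicontinuity of $g_\varphi$, I would fix $c \in \R$ and show the superlevel set $\{s \in \mathcal{S}: g_\varphi(s) \geq c\}$ is sequentially closed. Given $s_n \to s^*$ inside this set, pick maximizers $a_n \in C(s_n)$ with $\varphi(s_n, a_n) = g_\varphi(s_n) \geq c$. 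The sequential characterization of compact-valued upper hemicontinuous correspondences on metric spaces (Aliprantis-Border Theorem 17.20) yields a subsequence $(s_{n_k}, a_{n_k}) \to (s^*, a^*) \in \mathrm{Graph}(C)$. Upper semicontinuity of $\varphi$ then gives $g_\varphi(s^*) \geq \varphi(s^*, a^*) \geq \limsup_k \varphi(s_{n_k}, a_{n_k}) \geq c$.

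For the measurable selector, I would define the argmax correspondence $\Gamma(s) = \{a \in C(s): \varphi(s, a) = g_\varphi(s)\}$. Each $\Gamma(s)$ is a nonempty compact subset of $C(s)$ because it coincides with the top superlevel set of an upper semicontinuous function on a compact set; and since $\varphi$ and $g_\varphi$ are Borel measurable (being upper semicontinuous), the graph $\{(s, a) \in \mathrm{Graph}(C): \varphi(s, a) \geq g_\varphi(s)\}$ is Borel measurable in $\mathcal{S} \times \mathcal{A}$.

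The main obstacle is producing a Borel measurable selector for $\Gamma$ without assuming $\mathcal{A}$ is Polish, since standard selection results like Kuratowski-Ryll-Nardzewski require a Polish codomain. My plan is to pass to the metric completion $\bar{\mathcal{A}}$, which is Polish; compactness being intrinsic, each $\Gamma(s)$ remains a compact (hence closed) subset of $\bar{\mathcal{A}}$ and its graph remains Borel. Verifying weak measurability of $\Gamma$ as a $\bar{\mathcal{A}}$-valued correspondence --- e.g., via a projection argument exploiting the closed graph of $C$ in $\mathcal{S} \times \bar{\mathcal{A}}$ together with compactness of the fiber values --- would then let Kuratowski-Ryll-Nardzewski produce a Borel measurable selector $\pi_\varphi: \mathcal{S} \to \bar{\mathcal{A}}$. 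Because $\Gamma(s) \subseteq \mathcal{A}$ for every $s$, this selector in fact takes values in $\mathcal{A}$, and measurability with respect to the subspace Borel $\sigma$-algebra on $\mathcal{A}$ is automatic from the definition. Thus $\pi_\varphi$ is a measurable selector of $C$ and lies in $\Pi$ by Assumption \ref{as:well-posed}.
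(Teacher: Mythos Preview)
Your handling of well-definedness, boundedness, and upper semicontinuity of $g_\varphi$ is correct, and the idea of passing to the completion $\bar{\mathcal{A}}$ so that Kuratowski--Ryll-Nardzewski becomes available is sound in outline. The gap is the weak-measurability step for $\Gamma$. The ``projection argument'' you sketch reads most naturally as $\Gamma^\ell(F) = p_{\mathcal{S}}\bigl(\mathrm{Graph}(\Gamma) \cap (\mathcal{S} \times F)\bigr)$, but projections of Borel sets are only analytic in general; upgrading to Borel via compact-section projection theorems (Arsenin--Kunugui) requires $\mathcal{S}$ to be standard Borel, and the converse implication ``Borel graph $\Rightarrow$ weakly measurable'' in \cite[Theorem~18.6]{aliprantis2006infinite} requires the $\sigma$-algebra on $\mathcal{S}$ to be complete. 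Neither holds here---the appendix exists precisely to avoid the Polish hypothesis on $\mathcal{S}$ and $\mathcal{A}$, and the paper works with the (incomplete) Borel $\sigma$-algebra. Your plan is repairable without projection: for each closed $F \subseteq \bar{\mathcal{A}}$ the correspondence $s \mapsto C(s) \cap F$ is again upper hemicontinuous and compact-valued, so $h_F(s) := \max_{a \in C(s) \cap F} \varphi(s,a)$ (extended by a constant below $-\sup|\varphi|$ where the intersection is empty) is upper semicontinuous by the same argument you gave for $g_\varphi$, hence Borel, and then $\Gamma^\ell(F) = \{s: h_F(s) \geq g_\varphi(s)\}$ is Borel as a superlevel set of a Borel function.

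The paper takes a genuinely different route. It adapts the Dynkin--Yushkevich selection scheme rather than Kuratowski--Ryll-Nardzewski: extend $\varphi$ to all of $\mathcal{S} \times \mathcal{A}$ by $-M$ off $\mathrm{Graph}(C)$, approximate this extension from above by bounded continuous $h_n$, and trap the argmax correspondence $C_\varphi$ inside a decreasing sequence of open-valued correspondences $C_n$ built from $h_n$ and the Carath\'eodory distance $\rho_C(s,a) = \inf_{a' \in C(s)} \rho_{\mathcal{A}}(a,a')$. After checking that the sets $\{s: a \in C_n(s)\}$ are measurable and that every sequence $a_n \in C_n(s)$ has a limit point in $C_\varphi(s)$, the selection theorem \cite[Theorem~2.6.B]{dynkin1979controlled} produces the Borel selector directly, with no Polish embedding needed.
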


\begin{proof}
The function $g_\varphi$ is well-defined and upper semicontinuous by \cite[Lemma 17.30]{aliprantis2006infinite} and is bounded by the bound on $\varphi$.

Define the correspondence $C_\varphi: \mathcal{S} \to \mathcal{P}(\mathcal{A})$ as:
\begin{equation}
    C_\varphi(s) = \argmax_{a \in C(s)} \varphi(s, a) \subseteq C(s),
\end{equation}
for all states $s \in \mathcal{S}$. For any state $s \in \mathcal{S}$, since $g_\varphi$ is well-defined, we have $C_\varphi(s) \neq \emptyset$. The singleton set $\{ s \}$ and the set of admissible actions $C(s)$ are both compact, implying the product $\{ s \} \times C(s)$ is compact. Therefore, the set of maximizing state-action pairs $\argmax_{(s', a') \in \{ s \} \times C(s)} \varphi(s', a')$ is nonempty and compact \cite[Theorem 2.43]{aliprantis2006infinite}. Since:
\begin{equation}
    C_\varphi(s) = p_{\mathcal{A}}\left( \argmax_{(s', a') \in \{ s \} \times C(s)} \varphi(s', a') \right),
\end{equation}
where $p_{\mathcal{A}}: \mathcal{S} \times \mathcal{A} \to \mathcal{A}$ is the continuous function projecting $\mathcal{S} \times \mathcal{A}$ onto $\mathcal{A}$, the set of maximizing actions $C_\varphi(s)$ is compact.

The remainder of the proof is a modification of the proof of \cite[Theorem 2.5.A]{dynkin1979controlled}. Since $C$ is upper hemicontinuous, it is measurable. Therefore, $C$ is weakly measurable \cite[Lemma 18.2]{aliprantis2006infinite}. Let $\rho_{\mathcal{A}}: \mathcal{A} \times \mathcal{A} \to \R_+$ denote the metric on $\mathcal{A}$, and define the distance function $\rho_C: \mathcal{S} \times \mathcal{A} \to \R_+$ as:
\begin{equation}\label{eq:distance-function}
    \rho_C(s, a) = \inf_{a' \in C(s)} \rho_{\mathcal{A}}(a, a'),
\end{equation}
for all states $s \in \mathcal{S}$ and actions $a \in \mathcal{A}$. Since $C$ is weakly measurable, $\rho_C$ is a Carathéodory function \cite[Theorem 18.5]{aliprantis2006infinite}; that is, for every state $s \in \mathcal{S}$, the $s$-section $(\rho_C)_s$ is continuous, and for every action $a \in \mathcal{A}$, the $a$-section $(\rho_C)^a$ is a measurable function. 

Since $C$ is upper hemicontinuous and compact valued and $\mathcal{A}$ is a Hausdorff space (as it is a metric space), $\mathrm{Graph}(C)$ is a closed subset of $\mathcal{S} \times \mathcal{A}$ \cite[Theorem 17.10]{aliprantis2006infinite}. As discussed in \cite{295497} (and in a similar manner to the proof of \cite[Corollary 2]{askoura2008extension}), let $M \in \R_+$ denote the bound on $\varphi$, and define the extension $\bar{\varphi}: \mathcal{S} \times \mathcal{A} \to \R$ as:
\begin{equation}
    \bar{\varphi}(s, a) = \begin{cases}
        \varphi(s, a) & (s, a) \in \mathrm{Graph}(C),\\
        -M & (s, a) \in (\mathcal{S} \times \mathcal{A}) \setminus \mathrm{Graph}(C),
    \end{cases}
\end{equation}
for all states $s \in \mathcal{S}$ and actions $a \in \mathcal{A}$. To see that $\bar{\varphi}$ is upper semicontinuous, consider any real number $c \in \R$. If $c > -M$, then $\bar{\varphi}^{-1}([c, \infty)) = \varphi^{-1}([c, \infty))$. Since $\varphi$ is upper semicontinuous, there is a closed set $F_c \subseteq \mathcal{S} \times \mathcal{A}$ satisfying $\varphi^{-1}([c, \infty)) = \mathrm{Graph}(C) \cap F_c$, making $\bar{\varphi}^{-1}([c, \infty))$ a closed subset of $\mathcal{S} \times \mathcal{A}$. Otherwise, if $c \leq -M$, then $\bar{\varphi}^{-1}([c, \infty)) = \mathcal{S} \times \mathcal{A}$, which is a closed set.

Consider a monotonically nonincreasing sequence of bounded real-valued continuous functions $\{ h_n \in \mathcal{C}^0(\mathcal{S} \times \mathcal{A}): n \in \N \}$ converging pointwise to $\bar{\varphi}$; such a sequence is guaranteed to exist \cite[\S 2.4]{dynkin1979controlled}.
For each $n \in \N$, define the correspondence $C_n: \mathcal{S} \to \mathcal{P}(\mathcal{A})$ as:
\begin{align}
    C_n(s) &= \{ a \in \mathcal{A}: \rho_C(s, a) < 1/n \nonumber\\
    &~\qquad\qquad \mathrm{and}~ g_\varphi(s) < h_n(s, a) + 1/n \},
\end{align}
for all states $s \in \mathcal{S}$. Now, for any state $s \in \mathcal{S}$, recall that the $s$-section $(\rho_C)_s$ is continuous. For any $n \in \N$, since $h_n$ is continuous, the corresponding $s$-section $(h_n)_s$ is continuous. Thus, $C_n(s)$ is an open subset of $\mathcal{A}$, and can be represented as:
\begin{align}
    C_n(s) &= \left\{ a \in \mathcal{A}: (\rho_C)_s(a) \in \left(-\infty, 1/n\right) \right\}\nonumber\\
    &~\qquad \cap \left\{ a \in \mathcal{A}: (h_n)_s(a) \in \left( g_\varphi(s) - 1/n, \infty \right) \right\}\nonumber\\
    &= ((\rho_C)_s)^{-1}\left(\left(-\infty, 1/n\right)\right)\nonumber\\
    &~\qquad \cap ((h_n)_s)^{-1}\left(\left(g_\varphi(s) - 1/n, \infty\right)\right).
\end{align}
Since the sequence of functions $\{ h_n: n \in \N \}$ is monotonically nonincreasing, these correspondences satisfy $C_{n + 1}(s) \subseteq C_n(s)$ for all states $s \in \mathcal{S}$ and $n \in \N$. For any state $s \in \mathcal{S}$ and any action $a \in C_\varphi(s)$, we have $\rho_C(s, a) = 0 < 1/n$ and $g_\varphi(s) = \varphi(s, a) \leq h_n(s, a) < h_n(s, a) + 1/n$ for all $n \in \N$. This implies $C_\varphi(s) \subseteq \bigcap_{n = 1}^\infty C_n(s)$ for all states $s \in \mathcal{S}$. 

Next, note that $g_\varphi$ is a measurable function as it is bounded and upper semicontinuous. For any action $a \in \mathcal{A}$, recall that the $a$-section $(\rho_C)^a$ is a measurable function. Additionally, for any $n \in \N$, since $h_n$ is continuous, the corresponding $a$-section $(h_n)^a$ is a measurable function (as it is continuous). This means the following set is measurable:
\begin{align}
    &\{ s \in \mathcal{S}: a \in C_n(s) \}\nonumber\\
    &~= \left\{ s \in \mathcal{S}: (\rho_C)^a(s) \in \left(-\infty, 1/n\right) \right\}\nonumber\\
    &~\qquad\qquad\cap \left\{ s \in \mathcal{S}: g_\varphi(s) - ((h_n)^a)(s) \in \left(-\infty, 1/n\right) \right\}\nonumber\\
    &~= ((\rho_C)^a)^{-1}\left(\left(-\infty, 1/n\right)\right)\nonumber\\
    &~\qquad\qquad \cap (g_\varphi - (h_n)^a)^{-1}\left(\left(-\infty, 1/n\right)\right).
\end{align}

Lastly, for any state $s \in \mathcal{S}$, consider a sequence of actions $\{ a_n \in \mathcal{A}: n \in \N \}$ satisfying $a_n \in C_n(s)$ for all $n \in \N$. For each $n \in \N$, since $\rho_C(s, a_n) < 1 / n$, there is an action $a_n' \in C(s)$ satisfying $\rho_{\mathcal{A}}(a_n', a_n) < 2 / n$. Since $C(s)$ is compact, the sequence of actions $\{ a_n' \in C(s): n \in \N \}$ has a limit point $a^* \in C(s)$. That is, there is a monotonically increasing sequence $\{ n_k \in \N: k \in \N \}$ such that the subsequence $\{ a_{n_k}' \in C(s): k \in \N \}$ converges to $a^*$. Since $\rho_{\mathcal{A}}(a_{n_k}', a_{n_k}) < 2 / n_k \leq 2 / k$ for all $k \in \N$, the subsequence $\{ a_{n_k} \in \mathcal{A}: k \in \N \}$ also converges to $a^*$. For any $k, l \in \N$ with $k > l$, we have:
\begin{equation}
    h_{n_l}(s, a_{n_k}) \geq h_{n_k}(s, a_{n_k}) > g_\varphi(s) - 1/n_k \geq g_\varphi(s) - 1 / k,
\end{equation}
and since $h_{n_l}$ is continuous, this means $\lim_{k \to \infty} h_{n_l}(s, a_{n_k}) = h_{n_l}(s, a^*) \geq g_\varphi(s)$. Since the sequence of functions $\{ h_n: n \in \N \}$ converges pointwise to $\bar{\varphi}$ from above, we have $\lim_{l \to \infty} h_{n_l}(s, a^*) = \bar{\varphi}(s, a^*) = \varphi(s, a^*) \geq g_\varphi(s)$, implying $a^* \in C_\varphi(s)$. That is, the sequence of actions $\{ a_n: n \in \N \}$ has a limit point in $C_\varphi(s)$. By the measurability criterion in \cite[\S 2.6]{dynkin1979controlled}, the correspondence $C_\varphi$ satisfies the assumptions of \cite[Theorem 2.6.B]{dynkin1979controlled}; therefore, there exists a measurable selection $\pi_\varphi: \mathcal{S} \to \mathcal{A}$ of $C_\varphi$, implying $\pi_\varphi \in \Pi$ and $\varphi(s, \pi_\varphi(s)) = g_\varphi(s)$ for all states $s \in \mathcal{S}$.
\end{proof}

Equipped with Lemma \ref{lem:usc-measurable-selection}, we only need to show that the suprema in \eqref{eq:opt-bellman} are equal. Fix a bounded measurable function $g: \mathcal{S} \to \R$ and a state $s \in \mathcal{S}$. For any policy $\pi \in \Pi$, we have:
\begin{align}\label{eq:bellman-bound-for-policy}
    [\mathcal{T}_\pi(g)](s) &= r(s, \pi(s)) + \gamma g(f(s, \pi(s)))\nonumber\\
    &\leq \sup_{a \in C(s)} {(r(s, a) + \gamma g(f(s, a)))}.
\end{align}
Conversely, for any action $a \in C(s)$, we have:
\begin{align}\label{eq:bellman-bound-for-action}
    &r(s, a) + \gamma g(f(s, a)) = r_{\pi_{s, a}}(s) + \gamma g(f_{\pi_{s, a}}(s)) \nonumber\\
    &~\qquad= [\mathcal{T}_{\pi_{s, a}}(g)](s) \leq \sup_{\pi \in \Pi}{ [\mathcal{T}_\pi(g)](s) },
\end{align}
where $\pi_{s, a} \in \Pi$ satisfies $\pi_{s, a}(s) = a$. Considering all policies in \eqref{eq:bellman-bound-for-policy} and all actions in \eqref{eq:bellman-bound-for-action} establishes the equality of the suprema. Since $r$ and $g$ are bounded and upper semicontinuous and $f$ is continuous, the function $r + \gamma g \circ f$ is bounded and upper semicontinuous, so applying Lemma \ref{lem:usc-measurable-selection} to this function allows us to conclude that $\mathcal{T}(g) \in \mathcal{C}_b^u(\mathcal{S})$.

To show that the MPOP is well-posed, we show that the optimal value function $V^*$ is the fixed point of $\mathcal{T}$; that is $\mathcal{T}(V^*) = V^*$. However, though $\mathcal{T}$ is a $\gamma$-contraction \cite[\S 1.4.1]{bertsekas2011dynamic}, we cannot immediately apply the contraction mapping principle \cite[Theorem 3.48]{aliprantis2006infinite} to show the existence of a unique fixed point of $\mathcal{T}$ since $\mathcal{C}_b^u(\mathcal{S})$ is not a closed subset of $\mathcal{L}_b^0(\mathcal{S})$. 

Instead, we consider the sequence of bounded upper semicontinuous function $\{ V_n \in \mathcal{C}_b^u(\mathcal{S}): n \in \N \}$ generated by value iteration when the initial guess is chosen as the constant function $V_0 = M / (1 - \gamma)$, where $M \in \R_+$ denotes the bound on $r$. To verify that the functions in this sequence are bounded and upper semicontinuous, note that the initial guess is bounded and upper semicontinuous, and by induction, the value iteration update can be written as:
\begin{equation}
    V_{n + 1} = \mathcal{T}(V_n),
\end{equation}
for all $n \in \Z_+$. The sequence of functions is also monotonically nonincreasing. To see this, note that:
\begin{align}
    V_1(s) &= [\mathcal{T}(V_0)](s) = \sup_{a \in C(s)} (r(s, a) + \gamma V_0(f(s, a)))\nonumber\\
    &\leq M + \gamma M / (1 - \gamma) = M / (1 - \gamma) \leq V_0(s),
\end{align}
for all states $s \in \mathcal{S}$. Since $\mathcal{T}$ preserves ordering \cite{bertsekas2011dynamic}, if for some $n \in \Z_+$ we have $V_{n + 1}(s) \leq V_n(s)$ for all states $s \in \mathcal{S}$, then $[\mathcal{T}(V_{n + 1})](s) \leq [\mathcal{T}(V_n)](s)$ for all states $s \in \mathcal{S}$. Equivalently, $V_{n + 2}(s) \leq V_{n + 1}(s)$ for all states $s \in \mathcal{S}$. By induction, it follows that the sequence of functions is monotonically nonincreasing. Finally, the sequence of functions is bounded by $M / (1 - \gamma)$. By definition, $V_0$ satisfies this bound, and since the sequence of functions is nonincreasing, it is upper bounded by $M / (1 - \gamma)$. To show the lower bound of $-M / (1 - \gamma)$, note that:
\begin{equation}
    V_1(s) \geq -M + \gamma V_0(f(s, a)) \geq -M - \frac{\gamma}{1 - \gamma}M = -\frac{M}{1 - \gamma},
\end{equation}
for all states $s \in \mathcal{S}$. Similarly, if for some $n \in \Z_+$ we have $V_n(s) \geq -M / (1 - \gamma)$ for all states $s \in \mathcal{S}$, then:
\begin{equation}
    V_{n + 1}(s) \geq -M + \gamma V_n(f(s, a)) \geq -M - \frac{\gamma}{1 - \gamma} = -\frac{M}{1 - \gamma},
\end{equation}
for all states $s \in \mathcal{S}$. Therefore, the sequence of functions is bounded below by induction.

The pointwise limit of a monotonically nonincreasing sequence of bounded upper semicontinuous functions is upper semicontinuous \cite[\S 2.4]{dynkin1979controlled}, and since the sequence is bounded, so is the limit. Denote the limit by $g^* \in \mathcal{C}_b^u(\mathcal{S})$. To show that $g^*$ is a fixed point of $\mathcal{T}$, note that:
\begin{align}
    \| \mathcal{T}(g^*) - g^* \|_{\sup} &\leq \| \mathcal{T}(g^*) - V_{n + 1} \|_{\sup} + \| V_{n + 1} - g^* \|_{\sup}\nonumber\\
    &= \| \mathcal{T}(g^*) - \mathcal{T}(V_n) \|_{\sup} + \| V_{n + 1} - g^* \|_{\sup}\nonumber\\
    &\leq \gamma \| g^* - V_n \|_{\sup} + \| V_{n + 1} - g^* \|_{\sup},
\end{align}
for all $n \in \Z_+$. By choosing $n$ sufficiently large, we make $\| \mathcal{T}(g^*) - g^* \|_{\sup}$ arbitrarily small, implying $\| \mathcal{T}(g^*) - g^* \|_{\sup} = 0$, or $\mathcal{T}(g^*) = g^*$. Now, consider any other sequence of bounded upper semicontinuous functions $\{ W_n \in \mathcal{C}_b^u(\mathcal{S}): n \in \N \}$ generated by value iteration from a bounded and upper semicontinuous initial guess $W_0 \in \mathcal{C}_b^u(\mathcal{S})$. Since $\mathcal{T}$ is a $\gamma$-contraction, we have:
\begin{equation}
    \| W_{n + 1} - g^* \|_{\sup} = \| \mathcal{T}(W_n) - \mathcal{T}(g^*) \|_{\sup} \leq \gamma \| W_n - g^* \|_{\sup},
\end{equation}
for all $n \in \Z_+$, or:
\begin{equation}
    \| W_n - g^* \|_{\sup} \leq \gamma^n \| W_0 - g^* \|_{\sup},
\end{equation}
for all $n \in \Z_+$. This implies $\lim_{n \to \infty} W_n = g^*$.

We prove Theorem \ref{thm:well-posed} by showing that $V^* = g^*$.

\begin{proof}[Proof of Theorem \ref{thm:well-posed}]
First, for any policy $\pi \in \Pi$, we have:
\begin{align}
    g^*(s) &= \sup_{a \in C(s)} {(r(s, a) + \gamma g^*(f(s, a)))}\nonumber\\
    &\geq r_\pi(s) + \gamma g^*(f_\pi(s)),
\end{align}
for all states $s \in \mathcal{S}$. For any $T \in \N$, assume: 
\begin{equation}\label{eq:opt-bellman-fixed-point-lower-bound}
    g^*(s) \geq \sum_{t = 0}^{T - 1} \gamma^t r_\pi(f_\pi^t(s)) + \gamma^T g^*(f_\pi^T(s)),
\end{equation}
for all states $s \in \mathcal{S}$. Then:
\begin{align}
    &g^*(s)\nonumber\\
    &~\geq \sum_{t = 0}^{T - 1} \gamma^t r_\pi(f_\pi^t(s)) + \gamma^T ( r_\pi(f_\pi^T(s)) + \gamma g^*(f_\pi(f_\pi^T(s))) )\nonumber\\
    &~= \sum_{t = 0}^T \gamma^t r_\pi(f_\pi^t(s)) + \gamma^{T + 1} g^*(f_\pi^{T + 1}(s)),
\end{align}
for all states $s \in \mathcal{S}$. By induction, \eqref{eq:opt-bellman-fixed-point-lower-bound} holds for all $T \in \N$, and since $g^*$ is bounded, we have:
\begin{equation}
    g^*(s) \geq \sum_{t = 0}^\infty \gamma^t r_\pi(f_\pi^t(s)) = V_\pi(s),
\end{equation}
for all states $s \in \mathcal{S}$. Taking the supremum over all policies, we obtain $g^*(s) \geq \sup_{\pi \in \Pi} V_\pi(s) = V^*(s)$ for all states $s \in \mathcal{S}$.

We show the reverse inequality by following a modification of the proof of \cite[Theorem 6.3.1]{puterman2014markov}. Consider the sequence of bounded upper semicontinuous functions $\{ V_n \in \mathcal{C}_b^u(\mathcal{S}): n \in \N \}$ generated by value iteration from an arbitrary initial guess $V_0 \in \mathcal{C}_b^u(\mathcal{S})$. Since $\lim_{n \to \infty} V_n = g^*$, for any $\varepsilon \in \R_{++}$, there is a corresponding $N_\varepsilon \in \N$ such that:
\begin{equation}
    \| V_n - g^* \|_{\sup} < \frac{1 - \gamma}{1 + \gamma} \varepsilon,
\end{equation}
for all $n \in \N$ with $n \geq N_{\varepsilon}$. By Lemma \ref{lem:usc-measurable-selection}, there is a policy $\pi_\varepsilon \in \Pi$ satisfying:
\begin{align}\label{eq:vi-greedy}
    [\mathcal{T}_{\pi_\varepsilon}(V_{N_\varepsilon + 1})](s) &= r(s, \pi_\varepsilon(s)) + \gamma V_{N_\varepsilon + 1}(f(s, \pi_\varepsilon(s)))\nonumber\\
    &= \max_{a \in C(s)}{(r(s, a) + \gamma V_{N_\varepsilon + 1}(f(s, a)))}\nonumber\\
    &= [\mathcal{T}(V_{N_\varepsilon + 1})](s),
\end{align}
for all states $s \in \mathcal{S}$. Therefore:
\begin{align}
    &\| V_{\pi_\varepsilon} - V_{N_\varepsilon + 1} \|_{\sup} \nonumber\\
    &~\leq \| V_{\pi_\varepsilon} - \mathcal{T}(V_{N_\varepsilon + 1}) \|_{\sup} + \| \mathcal{T}(V_{N_\varepsilon + 1}) - V_{N_\varepsilon + 1} \|_{\sup}\nonumber\\
    &~= \| \mathcal{T}_{\pi_\varepsilon}(V_{\pi_\varepsilon}) - \mathcal{T}_{\pi_\varepsilon}(V_{N_\varepsilon + 1}) \|_{\sup}\nonumber\\
    &~\qquad\qquad + \| \mathcal{T}(V_{N_\varepsilon + 1}) - \mathcal{T}(V_{N_\varepsilon}) \|_{\sup}\nonumber\\
    &~\leq \gamma \| V_{\pi_\varepsilon} - V_{N_\varepsilon + 1} \|_{\sup} + \gamma \| V_{N_\varepsilon + 1} - V_{N_\varepsilon} \|_{\sup},
\end{align}
which implies:
\begin{align}
    &\| V_{\pi_\varepsilon} - V_{N_\varepsilon + 1} \|_{\sup} \leq \frac{\gamma}{1 - \gamma} \| V_{N_\varepsilon + 1} - V_{N_\varepsilon} \|_{\sup}\nonumber\\
    &~\qquad\leq \frac{\gamma}{1 - \gamma} \| V_{N_\varepsilon + 1} - g^* \|_{\sup} + \frac{\gamma}{1 - \gamma} \| g^* - V_{N_\varepsilon} \|_{\sup}\nonumber\\
    &~\qquad< 2\frac{\gamma}{1 - \gamma}\frac{1 - \gamma}{1 + \gamma} \varepsilon = \frac{2\gamma}{1 + \gamma}\varepsilon
\end{align}
Finally, we have:
\begin{align}
    &\| V_{\pi_\varepsilon} - g^* \|_{\sup} \leq \| V_{\pi_\varepsilon} - V_{N_\varepsilon + 1} \|_{\sup} + \| V_{N_\varepsilon + 1} - g^* \|_{\sup}\nonumber\\
    &~\qquad\qquad  < \frac{2\gamma}{1 + \gamma}\varepsilon + \frac{1 - \gamma}{1 + \gamma}\varepsilon = \varepsilon.
\end{align}
This means:
\begin{equation}
    g^*(s) < V_{\pi_\varepsilon}(s) + \varepsilon \leq \left( \sup_{\pi \in \Pi} V_\pi(s) \right) + \varepsilon = V^*(s) + \varepsilon,
\end{equation}
for all states $s \in \mathcal{S}$. Since $\varepsilon$ was arbitrary, we have $g^*(s) \leq V^*(s)$ for all states $s \in \mathcal{S}$.

Since $V^* = g^*$, we conclude that $V^*$ is bounded and upper semicontinuous. By Lemma \ref{lem:usc-measurable-selection}, there is a policy $\pi^* \in \Pi$ satisfying:
\begin{align}
    V_{\pi^*}(s) &= [\mathcal{T}_{\pi^*}(V_{\pi^*})](s)\nonumber\\
    &= r(s, \pi^*(s)) + \gamma V^*(f(s, \pi^*(s)))\nonumber\\ &= \sup_{a \in C(s)} \{ r(s, a) + \gamma V^*(f(s, a)) \}\nonumber\\
    &= [\mathcal{T}(V^*)](s) = V^*(s),
\end{align}
for all states $s \in \mathcal{S}$. This means $\pi^*$ is an optimal policy. Moreover, since $g^*$ is the limit of value iteration from any bounded and upper semicontinuous initial guess, $V^*$ is as well.
\end{proof}

Finally, we conclude with a proof of Lemma \ref{lem:relax-assumption}.

\begin{proof}[Proof of Lemma \ref{lem:relax-assumption}]
Even without the assumption that $\mathcal{S}$ and $\mathcal{A}$ are Polish spaces, $\varphi_{s, a}$ is continuous and bounded, so $\varphi_{s, a} \in \mathcal{C}_b^u(\mathcal{S})$. Therefore, by Lemma \ref{lem:usc-measurable-selection}, there is a policy $\pi_{s, a} \in \Pi$ satisfying \eqref{eq:distance-minimizer}, with $\pi_{s, a}(s) = a$.
\end{proof}

\end{document}